\documentclass{article}

\title{Tight Convergence Bounds for the Classical Kaczmarz Method}
\author{Runbo Yu\\
University of Wisconsin-Madison\\
\texttt{ryu86@wisc.edu}
\and
Jelena Diakonikolas\\
University of Wisconsin-Madison\\
\texttt{jelena@cs.wisc.edu}
}
\date{}

\usepackage[margin=1in]{geometry}
 \usepackage{times}
 \usepackage{amsmath,amsfonts,amsthm,bm}

\usepackage{dsfont}
\usepackage{mathtools}
\usepackage{cite}

\usepackage[colorlinks,urlcolor=blue,citecolor=blue,linkcolor=blue]{hyperref}
 \usepackage{color}

\usepackage{cleveref}

\usepackage{thm-restate}

\usepackage{algorithm}
\usepackage[noend]{algpseudocode}

\usepackage{graphicx}
\usepackage{subcaption}

\newtheorem{theorem}{Theorem}
\newtheorem{lemma}{Lemma}
\newtheorem{corollary}{Corollary}
\newtheorem{remark}{Remark}

\newtheorem{claim}{Claim}

\def\1{\bm{1}}

\def\vzero{{\bm{0}}}

\def\va{{\bm{a}}}
\def\vb{{\bm{b}}}

\def\vd{{\bm{d}}}
\def\ve{{\bm{e}}}

\def\vi{{\bm{i}}}

\def\vo{{\bm{o}}}

\def\vs{{\bm{s}}}

\def\vu{{\bm{u}}}
\def\vv{{\bm{v}}}
\def\vw{{\bm{w}}}
\def\vx{{\bm{x}}}

\def\mA{{\bm{A}}}

\def\mG{{\bm{G}}}

\def\mI{{\bm{I}}}

\def\mL{{\bm{L}}}
\def\mM{{\bm{M}}}

\def\mP{{\bm{P}}}

\def\mU{{\bm{U}}}
\def\mV{{\bm{V}}}

\def\mSigma{{\bm{\Sigma}}}

\newcommand\norm[1]{\left\| #1 \right\|}

\def\sR{{\mathbb{R}}}

\newcommand{\E}{\mathbb{E}}

\newcommand{\R}{\mathbb{R}}

\newcommand{\range}{\mathrm{range}}
\newcommand{\rank}{\mathrm{rank}}
\newcommand{\op}{\mathrm{op}}

\DeclareMathOperator*{\argmin}{arg\,min}

\newcommand{\innp}[1]{\langle #1 \rangle}
\usepackage[colorinlistoftodos]{todonotes}

\newcommand{\diag}{\mathrm{diag}}

\begin{document}

\maketitle

\begin{abstract}
    The classical method of Kaczmarz, introduced in 1937, is a  textbook iterative method for solving linear systems $\mA \vx = \vb.$ Despite its widespread use, particularly in the context of solving inverse problems where it is commonly included in software packages within Matlab, Python, and Julia, the precise characterization of convergence has long been deemed difficult to obtain. While different bounds on convergence rates have been established, they are largely unsatisfying as they cannot explain the classical, cyclic-update method's efficient convergence in practice. In this work, we obtain a tight characterization of convergence of the classical Kaczmarz method, establishing both linear and sublinear convergence bounds. The worst-case tight (i.e., exactly attained by some instances in the considered family) bounds are expressed in terms of a fixed matrix that depends only on $\mA,$ but are not fully interpretable in terms of the matrix spectrum and row correlations, which had been observed to have an impact on convergence. We thus provide relaxations of these bounds that are fully expressible in terms of matrix row norms, row correlations, rank, and extremal positive singular values. The provided relaxed bounds explain one-cycle convergence in special cases where the matrix rows are all either parallel or orthogonal to each other. We further argue that the dependence on different parameters appearing in the bounds is necessary and within a small constant factor of the best attainable in the worst case. Finally, our bounds explain why the classical cyclic update is faster than the randomized one when matrix rows are weakly correlated, which is often observed in inverse problems where the cyclic method is used.   
\end{abstract}

\section{Introduction}

The classical method of Kaczmarz is a simple iterative method for solving linear systems described by 
\begin{equation}\label{eq:main-prob}\tag{P}
    \mA \vx = \vb,
\end{equation}
where $\mA \in \R^{m \times n}$, $\vx \in \R^n$, and $\vb \in \R^m$, with $m\ge 1, n \geq 1$. 
Denote the rows of $\mA$ by $\va_i$, $i \in [m]$. Starting from $\vx_0$, the Kaczmarz method updates its iterate based on one row (i.e., one linear equality) at a time, traversing all rows in a cyclic manner from $1$ to $m$\footnote{Setting the ordering to be $1, 2, \dots, m$ is without loss of generality, as any deterministic ordering can be covered by reordering the rows of the linear system. For simplicity, throughout the paper, we take the ordering to be $1, 2, \dots, m$, as described in the classical method \cite{kaczmarz1937angenaherte}.}, and sets $\vx_{k+1} = \vx_{k+1,0} = \vx_{k,m}$ (where $\vx_{0, 0} = \vx_0$) at the end of each cycle. Denoting $[m] := \{1, \dots, m\}$ and by $\vb^{(i)}$ the $i^{\mathrm{th}}$ entry of $\vb,$ each within-cycle update is given by 
\begin{equation}\label{eq:Kaczmarz-update-within-cycle}
    \vx_{k,i} =\vx_{k,i-1} - \frac{\innp{\va_i, \vx_{k,i-1}}-\vb^{(i)}}{\|\va_i\|^2} \va_i,\; i \in [m].
\end{equation}
Thus, a full cycle update is given by 
\begin{equation}\label{eq:Kaczmarz-update-full}
    \vx_{k+1} =\vx_{k} - \sum_{i=1}^m\frac{\innp{\va_i, \vx_{k,i-1}}-\vb^{(i)}}{\|\va_i\|^2} \va_i.
\end{equation}

Kaczmarz method is widely used in practice, particularly in the context of solving inverse problems. For instance, it is used in computed tomography (where it is also known as the algebraic reconstruction technique---ART) \cite{gordon1970algebraic,hansen2018air,natterer2001mathematics}, electron tomography \cite{dahmen2016ettention}, magnetic particle imaging \cite{kluth2019enhanced}, and in numerical solvers for partial differential equations \cite{brannick2014bootstrap}. As noted in \cite{Vershynin_2007}, this method was implemented in the first medical scanner \cite{hounsfield1973computerized}. Classical, cyclic version of the method is implemented in various software packages; for instance, in AIR Tools II (MATLAB) \cite{hansen2018air}, ASTRA (Python/MATLAB) \cite{van2015astra,van2016fast}, and MPIReco.jl (Julia) \cite{knopp2019mpireco}, while different variants of the classical method are also implemented as part of TomoPy (Python) \cite{gursoy2014tomopy}, scikit-image (Python) \cite{van2014scikit}, and hypre (C/C++) \cite{falgout2002hypre} software packages. 

While simple to state and implement, Kaczmarz method has nevertheless long resisted any meaningful convergence guarantees. In fact, the difficulty of analyzing the classical cyclic version of the method is what motivated the introduction of the randomized version, where the ordering of updates \eqref{eq:Kaczmarz-update-within-cycle} is determined by sampling rows $i$ at random, according to a fixed probability distribution \cite{Vershynin_2007}. As the authors of \cite{Vershynin_2007} note:
\begin{center}
    \emph{``useful
theoretical estimates of the rate of convergence of the Kaczmarz method [...] are difficult to
obtain, at least for $m > 2.$''}
\end{center}
A similar sentiment is also echoed in more recent work; for instance, \cite{steinerberger2021randomized} remarks 
\begin{center}
\emph{``given
the intricate underlying geometry, convergence rates are diﬃcult to obtain.''}  
\end{center}
As a result, much of the theoretical development for Kaczmarz method and its extensions has focused on randomized variants \cite{steinerberger2021randomized,derezinski2025randomized,tan2019phase,huang2022linear,needell2014paved,gower2015randomized,attia2026fast}. On the other hand, theoretical results for the classical method have been more pessimistic \cite{sun2021worst,evron2022catastrophic,Oswald_2015,Dai_2015}, leading to bounds that (often polynomially) depend on the matrix dimensions or rank, and do not explain the method's efficient convergence frequently observed in practice, particularly in the context of row ordering and correlations among the matrix rows.  

In this work, we provide sharp convergence characterizations of the classical Kaczmarz method \eqref{eq:Kaczmarz-update-full}. Before summarizing our contributions, we introduce the necessary notation and state basic facts and assumptions used.

\subsection{Preliminaries}\label{sec:prelims}

As noted earlier, throughout the paper, we assume we are given a matrix $\mA \in \sR^{m \times n}$ and a vector $\vb \in \sR^m,$ where $m \geq 1, n \geq 1$, and $\vb \in \range(\mA),$ so the linear system has at least one solution $\vx^* \in \sR^n$, $\mA \vx^* = \vb$. We assume the system contains no zero rows (which can be removed without loss of generality). Throughout the paper, if the solution to the linear system is not unique, given an initial vector $\vx_0,$ we fix $\vx^*$ to be the solution that minimizes $\norm{\vx_0 - \vx^*}:$ 
\begin{equation}\label{eq:x^*-specification}
    \vx^* = \argmin_{\vx: \mA\vx = \vb}\norm{\vx_0 - \vx} = \vx_0 - \mA^\dagger(\mA\vx_0 - \vb),
\end{equation}
where $\mA^\dagger$ is the Moore-Penrose pseudoinverse of $\mA.$ 
This choice of $\vx^*$ ensures that $\vx_0 - \vx^* \in \range(\mA^\top)$. Because  Kaczmarz update \eqref{eq:Kaczmarz-update-within-cycle} adds a multiple of a row of \(\mA\), we also have $\vx_{k, i} - \vx^* \in \range(\mA^\top)$ for all $k, i$; in other words, all iterates shifted by $\vx^*$ remain in $\range(\mA^\top).$

We let $\mA$ be of arbitrary rank $r := \rank(\mA) \le \min\{m, n\}.$ Using the singular value decomposition, we write $\mA = \mU \mSigma \mV^\top$, where $\mU \in \R^{m \times r}$ is the matrix with left singular vectors as its columns, $\mV \in \R^{n \times r}$ is the matrix with right singular vectors as its columns, and $\mSigma = \diag(\sigma_1, \cdots, \sigma_r)$ with $\sigma_1 \ge ... \ge \sigma_r > 0$ are the (positive) singular values of $\mA$. Further, $\mU^\top\mU = \mV^\top\mV = \mI_r,$ where $\mI_r \in \sR^{r\times r}$ denotes the identity matrix, and the columns of $\mV$ span the range of $\mA^\top$. We denote the columns of $\mV$ by $\vv_\ell$, $\ell \in [r],$ where $[r]:=\{1, \dots, r\}$. We use $\hat{\va}_i = \frac{\va_i}{\|\va_i\|}$, where $\norm{\cdot} = \norm{\cdot}_2,$ to denote the rows of $\mA$ normalized to unit norm, and let $\hat{\mA}$ be the corresponding row-normalized matrix. Note that because row normalization preserves the row space, the columns of $\mV$ span $\range(\hat{\mA}^\top).$ Throughout, $\norm{\mA}_\op$ denotes the operator norm of $\mA$ and $\norm{\mA}_F$ denotes its Frobenius norm. 

Using that $\mA \vx^* = \vb$, Kaczmarz update \eqref{eq:Kaczmarz-update-within-cycle} can  equivalently be written in terms of normalized row vectors as
\begin{equation}\label{eq:Kacmarz-update-normalized}
    \vx_{k, i} = \vx_{k, i-1} - \innp{\hat{\va}_i, \vx_{k,i-1}-\vx^*} \hat{\va}_i. 
\end{equation}
Inspired by \cite{steinerberger2021randomized}, our analysis tracks $\ve_k^{(\ell)}=\innp{\vx_k-\vx^*, \vv_\ell}$, the $\ell^\mathrm{th}$ component of $\ve_k := \mV^\top (\vx_k-\vx^*)$, which is the projection of the solution residual $\vx_k-\vx^*$ onto the $\ell^{\mathrm{th}}$ right singular vector. Our choice of $\vx^*$ in \eqref{eq:x^*-specification} ensures that
\begin{equation}\label{eq:needed-gen-V}
    \vx_k - \vx^* = \mV\ve_k \quad \text{ and } \quad \norm{\vx_k - \vx^*} = \norm{\ve_k}.
\end{equation}
Our tight convergence results are stated in terms of the following matrices:
\begin{equation}\label{eq:L-and-M-defs}
    \mL :=
\begin{pmatrix}
1 
& 0 
& 0 
& \cdots 
& 0 \\

\innp{\widehat{\va}_2,\widehat{\va}_1}
& 1
& 0
& \cdots
& 0 \\

\innp{\widehat{\va}_3,\widehat{\va}_1}
& \innp{\widehat{\va}_3,\widehat{\va}_2}
& 1
& \cdots
& 0 \\

\vdots
& \vdots
& \vdots
& \ddots
& \vdots \\
\innp{\widehat{\va}_m,\widehat{\va}_1}
& \innp{\widehat{\va}_m,\widehat{\va}_2}
& \innp{\widehat{\va}_m,\widehat{\va}_3}
& \cdots
& 1
\end{pmatrix}, \quad \mM := \mI_r - (\hat{\mA}\mV)^\top \mL^{-1}\hat{\mA}\mV,
\end{equation}
where $\mI_r$ denotes the $r \times r$ identity matrix. 
By convention, in our results, $ab/(a+b) = 0$ for $a = b = 0.$

\subsection{Contributions}

As stated earlier, our main contributions are sharp convergence characterizations of Kaczmarz method, recovering known special cases for which the method converges in a single cycle, and aligning with prior empirical observations regarding the method's convergence depending strongly on the correlations among the pairs of matrix rows. Importantly, the provided bounds are dependent on the ordering of the rows, which is known to affect the method's performance \cite{Oswald_2015}.  

Our analysis begins by deriving the identity $\ve_{k+1} = \mM\ve_k$, which applies to arbitrary initial vectors and all $k \geq 0$ (\Cref{lem:dynamics-of-ek}, main technical lemma). Further, it is not hard to argue that $\norm{\mM}_\op < 1$, which immediately implies that $\ve_k = \mM^k\ve_0$ diminishes geometrically. Starting with this basic identity, we then establish our main result:
\begin{theorem}[Main Theorem]\label{thm:matrix-power-bound}
    Given a consistent linear system \eqref{eq:main-prob} where $\rank(\mA) = r,$ let $\mL, \mM$ be defined by \eqref{eq:L-and-M-defs}. Let Kaczmarz method be initialized at an arbitrary $\vx_0$ and let $\vx^*$ satisfy \eqref{eq:x^*-specification}. Then, for every $k\ge 0$, 
    \begin{gather}
        \sup_{\vx_0 \neq \vx^*}\frac{\norm{\vx_k - \vx^*}}{\norm{\vx_0 - \vx^*}} =\norm{\mM^k}_\op, \quad \sup_{\vx_0 \neq \vx^*}\frac{\norm{\mA\vx_k - \vb}}{\norm{\vx_0 - \vx^*}} = \norm{\mSigma \mM^k}_\op, \; \text{ and } \label{eq:tight-contraction}\\
        \frac{1}{k+1}\sum_{i=0}^k \norm{\mL^{-1}\hat{\mA}(\vx_i - \vx^*)}^2 = \frac{\norm{\vx_0 - \vx^*}^2 - \norm{\vx_{k+1} - \vx^*}^2}{k+1}. \label{eq:tight-telescoping}
    \end{gather}
    As a consequence, the squared residual satisfies, for all $k \geq 1,$ 
    \begin{gather*}
        \| \mA \vx_k - \vb\|^2 \le \min\Big\{\norm{\mSigma \mM}_\op^2\|\mM^{k-1}\|_\op^2 \|\vx_0 -\vx^* \|^2,\, 
        \frac{ (r -1)\max_{i \in [m]}\norm{\va_i}^2\norm{\mL - \mI_m}_\op^2 \norm{\vx_0 - \vx^*}^2}{k}\Big\}, \; \text{and}\\
        \frac{1}{k}\sum_{i=1}^k \|\mA(\vx_i -  \vx^*)\|^2  \leq  \frac{ \max_{i \in [m]}\norm{\va_i}^2\norm{\mL - \mI_m}_\op^2 \norm{\vx_0 - \vx^*}^2}{k}.
    \end{gather*}
\end{theorem}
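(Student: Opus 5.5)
The plan is to build everything on the identity $\ve_{k+1} = \mM\ve_k$ (\Cref{lem:dynamics-of-ek}), which I take as given. Iterating gives $\ve_k = \mM^k\ve_0$. By \eqref{eq:needed-gen-V}, $\norm{\vx_k-\vx^*} = \norm{\mM^k\ve_0}$ and $\norm{\vx_0-\vx^*}=\norm{\ve_0}$.

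\textbf{Tightness of the contraction bounds.} By \eqref{eq:x^*-specification}, $\vx_0-\vx^* = \mA^\dagger(\mA\vx_0-\vb)$. As $\vx_0$ ranges over $\sR^n$, this sweeps out all of $\range(\mA^\top)$, since one can take $\vx_0 = \vx^\circ + \vw$ for any $\vw\in\range(\mA^\top)$ and then $\vx^*=\vx^\circ$. Hence $\ve_0$ ranges over all of $\sR^r$, and the first supremum is exactly $\norm{\mM^k}_\op$. For the residual, $\mA\vx_k-\vb = \mU\mSigma\mV^\top\mV\ve_k = \mU\mSigma\mM^k\ve_0$, and $\mU$ has orthonormal columns, so $\norm{\mA\vx_k-\vb}=\norm{\mSigma\mM^k\ve_0}$. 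The second supremum follows in the same way.

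\textbf{The identity \eqref{eq:tight-telescoping}.} I would prove the one-cycle identity $\norm{\ve_k}^2-\norm{\ve_{k+1}}^2 = \norm{\mL^{-1}\hat\mA(\vx_k-\vx^*)}^2$ and then telescope and divide by $k+1$. Write $\mG := \hat\mA\mV$, so $\hat\mA(\vx_k-\vx^*) = \mG\ve_k$. Then
\[
\norm{\ve}^2-\norm{\mM\ve}^2 = 2\innp{\mG\ve,\mL^{-1}\mG\ve} - \norm{\mG^\top\mL^{-1}\mG\ve}^2 .
\]
Set $\vu=\mL^{-1}\mG\ve$. Then $\mG^\top\mL^{-1}\mG\ve = \mG^\top\vu$ and $\mG\ve=\mL\vu$. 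Also $\norm{\mG^\top\vu}^2 = \vu^\top\mG\mG^\top\vu = \vu^\top\hat\mA\hat\mA^\top\vu$, using $\mV\mV^\top\hat\va_i=\hat\va_i$. Since $\hat\mA\hat\mA^\top = \mL+\mL^\top-\mI_m$ (unit diagonal, strict lower part equal to that of $\mL$), the right-hand side becomes
\[
2\vu^\top\mL\vu - \vu^\top(\mL+\mL^\top-\mI_m)\vu = \norm{\vu}^2,
\]
as required. This algebra is the crux. I expect it to be the main point requiring care, but it is short; the subtlety is the identity $\hat\mA\hat\mA^\top=\mL+\mL^\top-\mI_m$ together with the projection $\mV\mV^\top$.

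\textbf{Consequences.}
\begin{itemize}
\item \emph{First bound in the minimum.} Write $\mA\vx_k-\vb = \mU\mSigma\mM\,\mM^{k-1}\ve_0$, so $\norm{\mA\vx_k-\vb}\le \norm{\mSigma\mM}_\op\norm{\mM^{k-1}}_\op\norm{\ve_0}$.
\item \emph{Averaged bound.} Use $\mA(\vx_i-\vx^*) = \diag(\norm{\va_j})\,\hat\mA(\vx_i-\vx^*) = \diag(\norm{\va_j})\,\mL\vu_i$, where $\vu_i = \mL^{-1}\hat\mA(\vx_i-\vx^*)$. For $i\ge1$, the within-cycle update makes $\vx_i = \vx_{i-1,m}$ satisfy the $m$-th equation. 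More usefully, I would express $\hat\mA(\vx_i-\vx^*)$ via the previous cycle's $\vu_{i-1}$. The coordinates of $\vu_{i-1}$ are exactly the step sizes $\innp{\hat\va_j,\vx_{i-1,j-1}-\vx^*}$, since lower-triangular forward substitution reproduces the cycle. Then
\[
\hat\va_j^\top(\vx_{i}-\vx^*) = \hat\va_j^\top(\vx_{i-1,j}-\vx^*) - \sum_{l>j}\innp{\hat\va_j,\hat\va_l}\vu_{i-1}^{(l)},
\]
and the first term vanishes. So $\hat\mA(\vx_i-\vx^*) = -(\mL-\mI_m)^\top\vu_{i-1}$, which gives
\[
\norm{\mA(\vx_i-\vx^*)}^2 \le \max_j\norm{\va_j}^2\,\norm{\mL-\mI_m}_\op^2\,\norm{\vu_{i-1}}^2 .
\]
Summing over $i=1,\dots,k$ and using \eqref{eq:tight-telescoping} (the sum of $\norm{\vu_i}^2$ is at most $\norm{\ve_0}^2$) yields the averaged bound.
\item \emph{Last-iterate $1/k$ bound.} Here I need monotonicity of $\norm{\mA\vx_i-\vb}$, which is not obviously true. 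The factor $(r-1)$ suggests the plan instead: bound $\norm{\vu_{k-1}}^2$ directly by the average, using that $\norm{\vu_i}^2$ is nonincreasing in $i$ up to a factor. Alternatively, use the eigen/Schur structure of $\mM$ with at most $r-1$ nonmonotone steps. I would first try to show that $\norm{\vu_i}$ is nonincreasing, or that $\norm{\vu_i}^2 \le \dots$, and fall back on an $(r-1)$ counting argument if that fails. This step is the least certain part of the plan.
\end{itemize}
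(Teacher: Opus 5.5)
\textbf{Verdict: there is a genuine gap, in the last-iterate $(r-1)/k$ bound.} Everything through the averaged bound is correct. Your algebraic proof of the one-cycle identity via $\hat{\mA}\hat{\mA}^\top=\mL+\mL^\top-\mI_m$ and your within-cycle derivation of $\hat{\mA}(\vx_i-\vx^*)=-(\mL-\mI_m)^\top\vu_{i-1}$ are the paper's two arguments with algebra and geometry swapped; the paper proves the first by within-cycle telescoping and the second algebraically.

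For the last-iterate bound you give no argument, and the route you would try first does not work: $\norm{\vu_i}$ need not be nonincreasing. Take rows $(1,0)$ and $(c,\sqrt{1-c^2})$, $\vb=\vzero$, $\vx_0=(0,1)^\top$. Then $\vu_0=(0,\sqrt{1-c^2})^\top$ and $\vu_1=c\sqrt{1-c^2}\,(-1,c)^\top$. Hence $\norm{\vu_1}^2=c^2(1+c^2)\norm{\vu_0}^2>\norm{\vu_0}^2$ whenever $c^2>(\sqrt{5}-1)/2$. Equivalently, $i\mapsto\norm{\vx_i-\vx^*}^2$ need not be convex. So along a single trajectory you cannot turn the averaged bound into a last-iterate bound.

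\textbf{The missing idea is to work at the matrix level, where monotonicity comes for free.}
\begin{itemize}
\item Your two identities give, for every $\ve_0\in\sR^r$ and $i\ge 0$,
\[
\norm{\mSigma\mM^{i+1}\ve_0}^2\le C\big(\norm{\mM^i\ve_0}^2-\norm{\mM^{i+1}\ve_0}^2\big), \qquad C=\max_j\norm{\va_j}^2\norm{\mL-\mI_m}_\op^2 .
\]
\item Since $\ve_0$ is arbitrary, apply this to each column of a fixed matrix $\mP$ and sum. This gives the same inequality with Frobenius norms of $\mSigma\mM^{i+1}\mP$, $\mM^i\mP$ and $\mM^{i+1}\mP$.
\item Telescoping then yields $\sum_{i=1}^k\norm{\mSigma\mM^i\mP}_F^2\le C\norm{\mP}_F^2$.
\item If $\mM\mP=\mM$, then for $i\ge1$ each summand equals $\norm{\mSigma\mM^i}_F$. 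This is nonincreasing in $i$, because $\norm{\mSigma\mM^{i+1}}_F\le\norm{\mSigma\mM^i}_F\norm{\mM}_\op$ and $\norm{\mM}_\op\le 1$. Hence $k\norm{\mSigma\mM^k}_F^2\le C\norm{\mP}_F^2$.
\item Finally, $\norm{\mA\vx_k-\vb}\le\norm{\mSigma\mM^k}_\op\norm{\ve_0}\le\norm{\mSigma\mM^k}_F\norm{\ve_0}$.
\end{itemize}
Taking $\mP=\mI_r$ already gives a factor $r$. To get $r-1$, note that $\hat{\mA}\hat{\va}_1$ is the first column of $\mL$, so $\mM\mV^\top\hat{\va}_1=\vzero$. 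With $\mP_1=\mI_r-(\mV^\top\hat{\va}_1)(\mV^\top\hat{\va}_1)^\top$ you then have $\mM=\mM\mP_1$ and $\norm{\mP_1}_F^2=r-1$. The factor $r-1$ is thus the price of bounding an operator norm by a Frobenius norm. It does not come from an eigenstructure argument or from counting nonmonotone steps.
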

While tight, the bounds \eqref{eq:tight-contraction}, \eqref{eq:tight-telescoping} stated in \Cref{thm:matrix-power-bound} are not fully interpretable in terms of geometric properties of the problem (such as rank, spectrum, and row correlations). The relaxation for the squared residual in the second part of the theorem gives bounds for the last and the average iterate that are easier to interpret, by observing that $\norm{\mL - \mI_m}_\op^2 \leq \norm{\mL - \mI_m}_F^2 = \sum_{i < j}\innp{\hat{\va}_i, \hat{\va}_j}^2$ and that $\innp{\hat{\va}_i, \hat{\va}_j} = \cos(\angle(\va_i, \va_j))$ are precisely the row correlations. In particular, these bounds immediately imply that the right-hand side is zero (and thus Kaczmarz method converges in a single cycle) whenever the matrix rows are either orthogonal to each other ($\innp{\hat{\va}_i, \hat{\va}_j} = 0$ for $i \neq j,$ implying $\norm{\mL - \mI}_\op = 0$) or parallel to each other (since in that case $r - 1 = 0$). 

The bound $\| \mA \vx_k - \vb\|^2 \le \norm{\mSigma \mM}_\op^2\|\mM^{k-1}\|_\op^2 \|\vx_0 -\vx^* \|^2$, however, is less easily interpretable. To obtain more interpretable bounds, we use matrix submultiplicativity to bound $\|\mM^{k-1}\|_\op^2 \leq \|\mM\|_\op^{2(k-1)},$ which immediately translates into linear rate of convergence, since $\norm{\mM}_\op < 1.$ We then provide more explicit bounds on both the prefactor $\norm{\mSigma \mM}_\op^2$ (in \Cref{lem:spectrum-of-SigmaM}) and the contraction factor $\norm{\mM}_\op$ (in \Cref{lem:spectrum-of-M}), dependent only on (subsets of) row norms, rank, row correlations, and extreme positive singular values $\sigma_1(\mA), \sigma_r(\mA),$ with the resulting convergence bound summarized in \Cref{cor:row-correlation-bound}. The relaxation for the prefactor obtained in \Cref{lem:spectrum-of-SigmaM} is tight; we provide a $2 \times 2$ example in \Cref{sec:tightness} for which the obtained bounds hold with equality. The bound on the contraction factor $\norm{\mM}_\op$ in \Cref{lem:spectrum-of-M} is of the form
\[\
    \|\mM\|_\op^2 \le\frac{\| \mL - \mI_m\|_\op^2}{\| \mL - \mI_m\|_\op^2+\sigma_r^2(\hat{\mA})} = 1 - \frac{\sigma_r^2(\hat{\mA})}{\| \mL - \mI_m\|_\op^2 + \sigma_r^2(\hat{\mA})}. 
    \]
Finally, as noted above, $\norm{\mL - \mI_m}_\op^2 \leq  \sum_{i < j}\innp{\hat{\va}_i, \hat{\va}_j}^2$, and we additionally use the ideas from \cite{Oswald_2015} to argue that $\norm{\mL - \mI_m}_\op \leq (1 + \lceil \log_2(r) \rceil/2)\sigma_1^2(\hat{\mA}).$ These bounds imply fast linear convergence of the method when either the rows are weakly correlated (as measured by $\sum_{i < j}\innp{\hat{\va}_i, \hat{\va}_j}^2$) relative to the minimum positive singular value $\sigma_r^2(\hat{\mA})$ of the row-normalized matrix $\hat{\mA}$ or when the rank $r$ is low and the row-normalized matrix $\hat{\mA}$ is well conditioned (as measured by $\sigma_1^4(\hat{\mA})/\sigma_r^2(\hat{\mA})$). Note here that $\sigma_i(\hat{\mA}) \in \big[\frac{\sigma_i(\mA)}{\max_{i \in [m]}\norm{\va_i}}, \, \frac{\sigma_i(\mA)}{\min_{i \in [m]}\norm{\va_i}}\big]$. 

As our final contribution, in \Cref{sec:tightness}, we provide simple examples that imply worst-case constant-factor tightness of provided (linear and sublinear) convergence bounds.

\subsection{Related Work}\label{sec:related-work}

That the Kaczmarz method should (asymptotically) converge to a solution is well established; this was, in fact, noted as ``obvious from a geometric perspective'' (but without a proof) in the original paper \cite{kaczmarz1937angenaherte}. Formally, the asymptotic convergence of the iterates of Kaczmarz method to the solution closest to the initial iterate was established in \cite{tanabe1971projection}. Our focus in this work is on obtaining sharp quantitative estimates characterizing convergence of the method, with primary focus on bounding the squared residual (though the results also imply quantitative convergence bounds for the iterate convergence; see \Cref{rem:convergence-of-the-iterates}).

In terms of non-asymptotic convergence results for the classical Kaczmarz method, there are various existing results in the literature, often considering generalizations of the method via either more general projections onto convex sets (POCS) or relaxations $\vx_{k,i}=\vx_{k,i-1}-\lambda\frac{\innp{\va_i,\vx_{k,i-1}}-b_i}{\|\va_i\|^2}\va_i$, for $\lambda \in (0, 2)$ (so the standard method is recovered for $\lambda = 1$). To provide an apples-to-apples comparison, we state the results from prior work that specifically apply to the classical Kaczmarz method. Perhaps the oldest known convergence result can be expressed as $\norm{\vx_k - \vx^*}^2 \leq \big(1 - \mathrm{det}(\hat{\mA}\hat{\mA}^\top)\big)^k \norm{\vx_0 - \vx^*}^2$ \cite{smith1977practical}. This bound can be arbitrarily loose, as discussed in \cite{sun2021worst}.  Another quantitative bound on the convergence of Kaczmarz method is implied by the results from \cite{Mandel84}, leading to a convergence bound of the form $\norm{\vx_k - \vx^*}^2 \leq \big(1 - \frac{\sigma_r^2(\hat{\mA})}{m^2}\big)^k \norm{\vx_0 - \vx^*}^2$; see also \cite{Dai_2015}. This is a coarse estimate of convergence, oblivious to structural properties of $\mA$ and explicitly scaling with the problem dimension. A slightly tighter estimate also follows from \cite{Mandel84}, replacing the contraction factor $1 - \frac{\sigma_r^2(\hat{\mA})}{m^2}$ by $1 - \frac{\mu^2}{m},$ where $\mu = \inf_{\vu \in \range(\mA^\top), \norm{\vu} = 1}\norm{\hat{\mA}\vu}_\infty;$ see \cite{chen2018kaczmarz}. 

 The first convergence bound for the classical Kaczmarz method that involves singular values and the rank of $\mA$ is due to \cite{Oswald_2015,oswald2017random}, giving convergence bound of the form  $\norm{\vx_k - \vx^*}^2 \leq \Big(1-\frac{\sigma_r^2(\hat{\mA})}{\big(1+C(1+\ln(r))\sigma_1^2(\hat{\mA})\big)^2}\Big)^k \norm{\vx_0 - \vx^*}^2$, where $C > 0$ is a universal constant (unspecified in \cite{Oswald_2015,oswald2017random}) and $r = \rank(\mA)$. This bound follows from a more general bound for the relaxed method. However, as noted in the same work, this bound provides only a worst-case guarantee because it does not depend on row ordering and therefore does not capture the ordering-specific structure of $\mA$. A slightly tighter bound with better, fully specified constants can be recovered from our results (see \Cref{lem:L-I-bnd} and \Cref{cor:row-correlation-bound}; note that our bounds also capture row correlations, which are order-specific, and are new). 

 Another related convergence bound $\norm{\vx_k - \vx^*}^2 \leq \big(1 - \max\big\{\frac{\sigma_r^2(\hat{\mA})}{m\sigma_1^2(\hat{\mA})}, \, \frac{\sigma_r^2(\hat{\mA})}{\sigma_1^4(\hat{\mA})(2 + \log(m)/\pi)^2}\big\}\big)\norm{\vx_0 - \vx^*}^2$ is due to \cite{sun2021worst}. The bounds provided in our work are strictly tighter. This is clear for the second term in their bound, due to $\log_2(r)$ in our bounds in place of their $\log(m)$. For the first term in the maximum, we claim that $\frac{\sigma_r^2(\hat{\mA})}{\norm{\mL- \mI_m}_\op^2 + \sigma_r^2(\hat{\mA})}$ in our bound gives a larger estimate, and thus a tighter bound. The reason is that 
 \[\norm{\mL - \mI_m}_\op^2 \leq \sum_{i<j}\innp{\hat{\va}_i, \hat{\va}_j}^2 = \frac{1}{2}\big(\sum_{i=1}^r \sigma_i^4(\hat{\mA}) - m\big) \leq \frac{m}{2}(\sigma_1^2(\hat{\mA})-1),\]
 where we have used $\sum_{i=1}^r \sigma_i^4(\hat{\mA}) \leq \sigma_1^2(\hat{\mA})\sum_{i=1}^r \sigma_i^2(\hat{\mA}) = m \sigma_1^2(\hat{\mA})$ (as the matrix $\hat{\mA}$ is row-normalized). For $r \geq 2$, $\norm{\mL - \mI_m}_\op^2 + \sigma_r^2(\hat{\mA}) \leq \frac{m}{2}(\sigma_1^2(\hat{\mA}) - 1) + \sigma_r^2(\hat{\mA}) \leq \frac{m}{2}\sigma_1^2(\hat{\mA}),$ as $\sigma_r^2(\hat{\mA}) \leq m/r,$ so our bound is tighter by a factor at least two. (For $r=1$, our bound correctly predicts convergence in a single cycle, thus is trivially tighter.) Moreover, $\norm{\mL- \mI_m}_\op^2$ preserves row order dependence and can generally be much smaller than the provided upper bound. 

 While most of the focus in prior work has been on quantifying linear rates of convergence, there are far fewer results concerning regimes where sublinear rates may provide tighter estimates across a fixed number of cycles \cite{evron2022catastrophic,swartworth2023nearly,reich2023polynomial}. Most relevant to our work is \cite{evron2022catastrophic}. While their work considers more general settings motivated by continual learning, when specialized to classical Kaczmarz method, their convergence bound is, assuming $m \leq n$ and $k \geq m,$ $\norm{\hat{\mA} (\vx_k - \vx^*)}^2 \leq \min\big\{\frac{m^{5/2}}{\sqrt{k}},\, \frac{m^2(r-1)}{2 k}\big\}\norm{\vx_0 - \vx^*}^2,$ so translated to the original matrix residual, it becomes $\norm{{\mA} \vx_k - \vb}^2 \leq \max_{i\in[m]}\norm{\va_i}^2\min\big\{\frac{m^{5/2}}{\sqrt{k}},\, \frac{m^2(r-1)}{2 k}\big\}\norm{\vx_0 - \vx^*}^2.$ This is a much coarser bound than $\frac{ (r -1)\max_{i \in [m]}\norm{\va_i}^2\norm{\mL - \mI_m}_\op^2 \norm{\vx_0 - \vx^*}^2}{k}$ from \Cref{thm:matrix-power-bound}, as $\norm{\mL - \mI_m}_\op^2 \leq \frac{m(m-1)}{2}$. 

In addition to the discussed convergence results for Kaczmarz method, there are other lines of work that relate to ours on a technical level. As mentioned in \Cref{sec:prelims}, the inspiration for tracking the projections of $\vx_k - \vx^*$ onto the space spanned by the right singular vectors of $\mA$, $\ve_k = \mV^\top(\vx_k - \vx^*),$ came from the recent results concerning convergence of randomized Kaczmarz method \cite{steinerberger2021randomized}. However, this is where the overlap with the work on randomized Kaczmarz ends. Our intuition for bringing the evolution of the projected vectors $\ve_k$ into a form involving a triangular matrix came from the recent analyses of cyclic \cite{song2023cyclic} and incremental gradient \cite{cai2024tighter} methods where such matrices arise from relating within-cycle progress to the full-cycle quantities (see also \cite{cai2026near} for another example where tracking progress in the aggregate, across the full cycle of updates, turns out to be crucial). Finally, we note that lower triangular matrices also arise in prior analyses of Kaczmarz method, notably in \cite{Oswald_2015}. Here we point out that, since the obtained relationships are algebraically equivalent to the Kaczmarz update \eqref{eq:Kaczmarz-update-full}, it is not surprising that related quantities would arise before any relaxations are made. In a similar fashion, \cite{xu2002method} provides a tight variational characterization of convergence of POCS (which includes Kaczmarz method as a special case), in terms of quantities involving projection operators. However, we note that, crucially, the insights leading to tight convergence results that are \emph{interpretable} come from identifying the ``right'' quantities to track, so in that respect algebraic equivalence is irrelevant. 
 
 Finally, a recent work \cite{hansen2026spectral} studied the cyclic sweep operator of the form $\mG=\mI_n-\mA^\top\mL^{-1}\mA$ for consistent, possibly rank-deficient inverse problems, where $\mL$ is a lower-triangular matrix similar in structure to $\mL$ defined in \eqref{eq:L-and-M-defs}, and $\mG$ is related to our $\mM;$ namely,  $\mM = \mV^\top\mG \mV.$ 
 Their result uses $\mG$ to reason about fast initial convergence and row-order dependence; however, it does not provide any explicit, nonasymptotic convergence bounds.

\section{Main results}

In this section, we state and prove all our results concerning tight convergence characterizations for the classical, deterministic cyclic Kaczmarz update stated in \eqref{eq:Kaczmarz-update-full}. We begin the discussion by proving a core technical lemma (\Cref{lem:dynamics-of-ek}), which leads to exactly tight convergence bounds that were stated in \Cref{thm:matrix-power-bound}. We then provide bounds on quantities involving auxiliary matrices $\mL$ and $\mM$ (defined in \eqref{eq:L-and-M-defs}), based on which the tight bounds are expressed, obtaining more interpretable bounds in terms of the matrix spectrum, row norms and correlations, and rank (\Cref{sec:interpretable-bounds}). Finally, in \Cref{sec:tightness}, we provide simple illustrative examples that demonstrate tightness of the obtained (relaxed, interpretable) convergence bounds.  

\begin{lemma}[Main Lemma]\label{lem:dynamics-of-ek}
    Consider a consistent linear system \eqref{eq:main-prob}, where $\rank(\mA) = r.$ 
    For $\ve_k :=\mV^\top (\vx_k-\vx^*)$, where $\vx^*$ is the solution closest to $\vx_0$ (see \eqref{eq:x^*-specification}), we have 
    \[
    \ve_{k+1}=(\mI_r - (\hat{\mA}\mV)^\top \mL^{-1}\hat{\mA}\mV)\ve_k = \mM\ve_k,
    \]
    where $\mL$ and $\mM$ were defined in \eqref{eq:L-and-M-defs}.
Furthermore, $$\|\ve_{k+1}\|^2 = \|\ve_k\|^2 - \| \mL^{-1} \hat{\mA} \mV \ve_k\|^2 \le \|\ve_k \|^2,$$
and, as a result, $\| \mM\|_\op < 1$. 
\end{lemma}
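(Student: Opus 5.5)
Writing $\vd_{k,i} := \vx_{k,i}-\vx^*$, the plan is to express the within-cycle coefficients $\beta_i := \innp{\hat{\va}_i, \vd_{k,i-1}}$ in terms of the cycle-start residual $\vd_{k,0} = \vx_k - \vx^*$. By \eqref{eq:Kacmarz-update-normalized}, $\vd_{k,i} = \vd_{k,i-1} - \beta_i \hat{\va}_i$, so unrolling gives $\vd_{k,i-1} = \vd_{k,0} - \sum_{j<i}\beta_j \hat{\va}_j$. Taking the inner product with $\hat{\va}_i$ yields
\[
\beta_i + \sum_{j<i}\innp{\hat{\va}_i,\hat{\va}_j}\beta_j = \innp{\hat{\va}_i, \vd_{k,0}},
\]
that is, $\mL\boldsymbol{\beta} = \hat{\mA}\vd_{k,0}$. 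Since $\mL$ is unit lower triangular, it is invertible and $\boldsymbol{\beta} = \mL^{-1}\hat{\mA}(\vx_k-\vx^*)$. The full-cycle update is then $\vx_{k+1}-\vx^* = \vd_{k,0} - \hat{\mA}^\top\boldsymbol{\beta}$. Using \eqref{eq:needed-gen-V}, substitute $\vd_{k,0}=\mV\ve_k$ and apply $\mV^\top$; together with $\mV^\top\hat{\mA}^\top = (\hat{\mA}\mV)^\top$, this gives $\ve_{k+1} = \ve_k - (\hat{\mA}\mV)^\top\mL^{-1}\hat{\mA}\mV\ve_k = \mM\ve_k$.

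For the norm identity, each step is an orthogonal projection onto a hyperplane through $\vx^*$, so by Pythagoras $\|\vd_{k,i}\|^2 = \|\vd_{k,i-1}\|^2 - \beta_i^2$. This holds because $\hat{\va}_i$ has unit norm and $\vd_{k,i}\perp\hat{\va}_i$. Summing over $i\in[m]$ gives $\|\vx_{k+1}-\vx^*\|^2 = \|\vx_k-\vx^*\|^2 - \|\boldsymbol{\beta}\|^2$, and by \eqref{eq:needed-gen-V} this is exactly $\|\ve_{k+1}\|^2 = \|\ve_k\|^2 - \|\mL^{-1}\hat{\mA}\mV\ve_k\|^2$. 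This identity holds for every $\ve_k\in\sR^r$. The reason is that any $\ve\in\sR^r$ arises as $\ve_0$ for a suitable $\vx_0$ (take $\vx_0 = \vx^*+\mV\ve$, which lies in $\vx^* + \range(\mA^\top)$, so $\vx^*$ is indeed the closest solution). Hence $\|\mM\ve\|^2 = \|\ve\|^2 - \|\mL^{-1}\hat{\mA}\mV\ve\|^2$ for all $\ve$.

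For the strict inequality $\|\mM\|_\op<1$, the key observation is that $\hat{\mA}\mV$ has trivial kernel. It is an $m\times r$ matrix of rank $r$, because the columns of $\mV$ form an orthonormal basis of $\range(\hat{\mA}^\top)$, so $\hat{\mA}\mV\ve = 0$ forces $\mV\ve\in\range(\hat{\mA}^\top)\cap\ker(\hat{\mA}) = \{0\}$. Since $\mL^{-1}$ is invertible, $\|\mL^{-1}\hat{\mA}\mV\ve\|>0$ for every $\ve\neq 0$. By compactness of the unit sphere in $\sR^r$, $\min_{\|\ve\|=1}\|\mL^{-1}\hat{\mA}\mV\ve\|^2 = c > 0$, and therefore $\|\mM\|_\op^2 \le 1-c < 1$.

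The only real care points are the triangular-system derivation of $\boldsymbol{\beta}$ (index bookkeeping), the extension of the identity from iterates to all of $\sR^r$, and the injectivity argument. The last of these is the step I expect to need the most attention, since it is what turns $\le$ into a strict contraction.
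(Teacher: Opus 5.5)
Your proposal is correct and follows essentially the same route as the paper: the unit lower-triangular system $\mL\boldsymbol{\beta} = \hat{\mA}(\vx_k-\vx^*)$ for the within-cycle coefficients, then applying $\mV^\top$ to the full-cycle update, then Pythagorean telescoping of $\|\vd_{k,i}\|^2$ for the norm identity. Your explicit compactness step, which gives $\norm{\mM}_\op^2 \le 1 - \sigma_{\min}^2(\mL^{-1}\hat{\mA}\mV)$, makes precise the passage from pointwise strict decrease to $\norm{\mM}_\op<1$ that the paper leaves implicit.
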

\begin{proof}
    The Kaczmarz update stated in the normalized form \eqref{eq:Kacmarz-update-normalized} gives 
    \(
    \vx_{k+1}-\vx^* = \vx_k - \vx^* - \sum_{i=1}^m \innp{\hat{\va}_i, \vx_{k,i-1}-\vx^*} \hat{\va}_i.
    \) 
    Taking the inner product of both sides with $\vv_\ell$ then leads to
    \begin{equation}\notag
        \ve_{k+1}^{(\ell)} = \ve_{k}^{(\ell)} - \sum_{i=1}^m \innp{\hat{\va}_i, \vx_{k,i-1}-\vx^*} \innp{\hat{\va}_i, \vv_\ell}.
    \end{equation}
    To accurately track the evolution of $\ve_k$ and within-cycle cancellations, inspired by \cite{cai2024tighter}, the idea is to use an auxiliary lower triangular matrix $\mL$ defined in the lemma statement.  
    Observe that $\innp{\hat{\va}_i, \vv_\ell} = (\hat{\mA} \vv_\ell)_i$. Define $\vs \in \R^m$ by $\vs^{(i)} = \innp{\hat{\va}_i, \vx_{k,i-1}-\vx^*}$, and let $\vd_i = \vx_{k,i} - \vx^*$. Then $\vs^{(i)} = \innp{\hat{\va}_i, \vd_{i-1}}$ and, by \eqref{eq:Kaczmarz-update-within-cycle} and the definition $\hat{\va},$ 
    \begin{equation}\label{eq:recursion-on-d}
        \vd_i = \vd_{i-1} - \innp{ \hat{\va}_i, \vd_{i-1}}\hat{\va}_i = \vd_{i-1} - \vs^{(i)} \hat{\va}_i.
    \end{equation}
    We claim that, for any $i \in [m]$, 
    \begin{equation}\label{eq:cumulative-s}
        \sum_{j=1}^{i-1}\innp{\hat{\va}_i, \hat{\va}_j}\vs^{(j)} + \vs^{(i)} = \innp{\hat{\va}_i, \vd_0} = \innp{\hat{\va}_i, \vx_k - \vx^*}.
    \end{equation} 
    To prove this claim, first note that, by definition, $\vs^{(1)} = \innp{\hat{\va}_1, \vd_0}$.
    For $i\ge2$, by unrolling the recursion in \eqref{eq:recursion-on-d}, 
    \begin{align*}
        \vs^{(i)} = \innp{\hat{\va}_i, \vd_{i-1}} 
        = \innp{\hat{\va}_i, \vd_0 - \sum_{j=1}^{i-1}\vs^{(j)} \hat{\va}_j} 
        = \innp{\hat{\va}_i, \vd_0} - \sum_{j=1}^{i-1} s^{(j)}\innp{\hat{\va}_i, \hat{\va}_j}.
    \end{align*} 
    The claimed identity \eqref{eq:cumulative-s} follows by moving the summation term to the left-hand side.

    Stacking the identities \eqref{eq:cumulative-s} for all $i \in [m]$ leads to 
    \(
    \mL \vs = \hat{\mA} \vd_0 = \hat{\mA}(\vx_k - \vx^*). 
    \) 
    Since $\mL$ is lower triangular with all diagonal entries equal to $1$, it is invertible. Thus,
    \begin{equation}\label{eq:s-identity-1}
        \vs = \mL^{-1} \hat{\mA}(\vx_k-\vx^*).
    \end{equation}
    Furthermore, as noted in \Cref{sec:prelims}, the right singular vectors $\vv_1, \cdots, \vv_r$ span $\range(\mA^\top)$, and by the choice of $\vx^*,$ $\vx_k-\vx^* \in \range(\mA^\top)$ (see \eqref{eq:needed-gen-V}). Therefore, $\vx_k-\vx^* =
    \sum_{j=1}^r \ve_{k}^{(j)} \vv_j.$ 
    Substituting this representation into \eqref{eq:s-identity-1} gives
    \begin{equation}\label{eq:s-indentity-2}
        \vs = \mL^{-1}\hat{\mA} \sum_{j=1}^r \ve_k^{(j)}\vv_j = \sum_{j=1}^r \ve_k^{(j)}(\mL^{-1}\hat{\mA}\vv_j).
    \end{equation}

    Consequently,
    \begin{align*}
        \ve_{k+1}^{(\ell)} &= \ve_{k}^{(\ell)} - \sum_{i=1}^m \innp{\hat{\va}_i, \vx_{k,i-1}-\vx^*} \innp{\hat{\va}_i, \vv_\ell} \\
        &\stackrel{(i)}{=} \ve_k^{(\ell)}- \innp{\vs, \hat{\mA}\vv_\ell} \\
        &\stackrel{(ii)}{=} \ve_k^{(\ell)} - \sum_{j=1}^r \innp{\mL^{-1}(\hat{\mA}\vv_j), \hat{\mA}\vv_\ell}\ve_k^{(j)},
    \end{align*}
    where $(i)$ is by the definition of $\vs$ and $(ii)$ follows by \eqref{eq:s-indentity-2}. 
    Writing these relations in matrix form now gives
    $\ve_{k+1}=(\mI_r - (\hat{\mA}\mV)^\top \mL^{-1}\hat{\mA}\mV)\ve_k,$ completing the proof of the first lemma claim. 

    The second lemma claim follows by telescoping. Indeed, since 
    $\vd_i = \vd_{i-1} - \vs^{(i)} \hat{\va}_i$, we have 
    \begin{align*}
        \| \vd_i\|^2 &= \| \vd_{i-1}\|^2 - 2 \vs^{(i)} \innp{\vd_{i-1}, \hat{\va}_i} + (\vs^{(i)})^2 \|\hat{\va}_i \|^2 \\
        &= \| \vd_{i-1}\|^2 -2 (\vs^{(i)})^2 + (\vs^{(i)})^2 \\
        &= \| \vd_{i-1}\|^2-(\vs^{(i)})^2.
    \end{align*}
    Summing over $i\in [m]$ gives
    \begin{align*}
        \|\ve_{k+1}\|^2 - \|\ve_k \|^2 &= \| \mV^\top \vd_m \|^2 - \|\mV^\top \vd_0\|^2 \\ 
        &= \sum_{i=1}^m\big( \|\vd_i \|^2 - \| \vd_{i-1}\|^2\big) \\
        &= -\sum_{i=1}^m (\vs^{(i)})^2 
        = -\|\vs\|^2 \\
        &= -\|\mL^{-1} \hat{\mA}(\vx_k - \vx^*) \|^2 = -\|\mL^{-1}\hat{\mA}\mV \ve_k \|^2,
    \end{align*}
    where the last equality is by again using $\vx_k - \vx^* = \sum_{\ell=1}^r\innp{\vx_k - \vx^*, \vv_\ell}\vv_\ell = \mV\ve_k.$ 
    We finally conclude that since $\ve_{1} = \mM\ve_0$ and $\norm{\ve_{1}}^2 \leq \norm{\ve_0}^2 -\|\mL^{-1} \hat{\mA}\mV\ve_0 \|^2$, which is $< \norm{\ve_0}^2$ whenever $\ve_0 \neq \vzero$ as $\hat{\mA}\mV$ has full column rank and $\mL$ is invertible, it must be $\norm{\mM}_{\op} < 1,$ as $\vx_0$ (and thus $\ve_0$) can be arbitrary. 
\end{proof}

Observe that \Cref{lem:dynamics-of-ek} gives the exact evolution of the solution residual $\vx_k - \vx^*$ in the basis induced by right singular vectors $\mV,$ via relation $\ve_{k + 1} = \mM \ve_k$, $\mM := \mI_r - (\hat{\mA}\mV)^\top \mL^{-1}\hat{\mA}\mV,$ for all $k \geq 0.$ Importantly, the matrix $\mM$ is fully specified by the input data, as it only depends on quantities that are fixed and fully determined by the input matrix $\mA.$ In that sense, \Cref{lem:dynamics-of-ek} tightly characterizes Kaczmarz updates. 

A useful identity relating $\mL$ and $\hat{\mA}$ is stated in the following claim. It is used in deriving quantitative convergence results in \Cref{thm:matrix-power-bound} and \Cref{cor:row-correlation-bound}.

\begin{claim}\label{claim:aux-L-hatA}
    Given a matrix $\mA$ and its row-normalized form $\hat{\mA}$, let $\mL$ be defined as in \Cref{lem:dynamics-of-ek}. Then $\hat{\mA}\hat{\mA}^\top = \mL + \mL^\top - \mI_m$, and, as a consequence, $\mL \mL^\top = (\mL -\mI_m)(\mL - \mI_m)^\top + \hat{\mA}\hat{\mA}^\top.$
\end{claim}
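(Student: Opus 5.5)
The plan is a direct entrywise comparison, followed by an algebraic expansion. For the first identity, note that the $(i,j)$ entry of $\hat{\mA}\hat{\mA}^\top$ is $\innp{\hat{\va}_i, \hat{\va}_j}$. Its diagonal entries equal $\norm{\hat{\va}_i}^2 = 1$, since the rows of $\hat{\mA}$ have unit norm (which needs the standing assumption that $\mA$ has no zero rows).

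Next I would read off the entries of $\mL$ from \eqref{eq:L-and-M-defs}: $\mL_{ij} = \innp{\hat{\va}_i, \hat{\va}_j}$ for $i > j$, $\mL_{ii} = 1$, and $\mL_{ij} = 0$ for $i < j$. Hence $\mL^\top$ carries the same correlations strictly above the diagonal and ones on the diagonal. Summing, $\mL + \mL^\top$ has off-diagonal entries $\innp{\hat{\va}_i, \hat{\va}_j}$, by symmetry of the inner product, and diagonal entries $2$. Subtracting $\mI_m$ fixes the diagonal to $1$, which gives $\hat{\mA}\hat{\mA}^\top = \mL + \mL^\top - \mI_m$.

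For the consequence, I would expand
\[
(\mL - \mI_m)(\mL - \mI_m)^\top = \mL\mL^\top - \mL - \mL^\top + \mI_m,
\]
and then add $\hat{\mA}\hat{\mA}^\top = \mL + \mL^\top - \mI_m$ to it. The terms $-\mL - \mL^\top + \mI_m$ cancel exactly, leaving $\mL\mL^\top$, as claimed.

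There is no real obstacle here. The only points needing care are the diagonal bookkeeping, where the unit row norms are used, and the symmetry of the inner product, which ensures the strictly upper part of $\mL^\top$ matches the upper part of $\hat{\mA}\hat{\mA}^\top$.
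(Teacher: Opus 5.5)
Your proposal is correct and follows essentially the same route as the paper: an entrywise comparison of $\hat{\mA}\hat{\mA}^\top$ with $\mL + \mL^\top - \mI_m$, then a short algebraic expansion. The paper writes $\mL\mL^\top = (\mL - \mI_m + \mI_m)(\mL - \mI_m + \mI_m)^\top$, which is the same cancellation as yours, and your remark that the unit diagonal of $\hat{\mA}\hat{\mA}^\top$ relies on there being no zero rows makes explicit a point the paper leaves implicit.
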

\begin{proof}
    Consider $\hat{\mA}\hat{\mA}^\top$. Its $(i,j)$ entry is $(\hat{\mA}\hat{\mA}^\top)^{(i,j)} = \innp{\hat{\va}_i, \hat{\va}_j}$, while $$L^{(i,j)}= \begin{cases}
        \innp{\widehat{\va}_i,\widehat{\va}_j}, & j < i, \\
        1, & j = i, \\
        0, & j > i.
    \end{cases}$$
    Thus, $\hat{\mA}\hat{\mA}^\top = \mL + \mL^\top - \mI_m$. It remains to derive 
    \begin{align*}
         \mL \mL^\top &= (\mL -\mI_m + \mI_m)(\mL -\mI_m + \mI_m)^\top \\
        &=(\mL -\mI_m)(\mL - \mI_m)^\top + \hat{\mA}\hat{\mA}^\top,
    \end{align*}
    and the proof is complete.
\end{proof}

We are now ready to prove \Cref{thm:matrix-power-bound}. 
\begin{proof}[Proof of \Cref{thm:matrix-power-bound}]
    The first three equalities (in \eqref{eq:tight-contraction}, \eqref{eq:tight-telescoping}) are a direct consequence of \Cref{lem:dynamics-of-ek}. In particular, since any $\ve_0 \in \sR^r$ can be realized via $\vx_0 = \vx^* + \mV\ve_0,$ \Cref{lem:dynamics-of-ek} can be invoked with $\ve_0$ chosen as the top unit right singular vector of $\mM^k$ or $\mSigma \mM^k.$ Equality \eqref{eq:tight-telescoping} follows from $\|\ve_{k+1}\|^2 = \|\ve_k\|^2 - \| \mL^{-1} \hat{\mA} \mV \ve_k\|^2 \le \|\ve_k \|^2$ and \eqref{eq:needed-gen-V}, which leads to 
    \begin{equation}\label{eq:telescoping-for-distance-to-opt}
    \norm{\vx_{k+1} - \vx^*}^2 = \norm{\vx_k - \vx^*}^2 - \norm{\mL^{-1}\hat{\mA}(\vx_{k} - \vx^*)}^2,
    \end{equation}
    and so all that is needed is telescoping and a simple rearrangement.

Further, by the same lemma, $\ve_k = \mM^k \ve_0$, where $\ve_0 = \mV^\top(\vx_0-\vx^*)$. 
Since $\vb=\mA\vx^*$, $\mA=\mU\mSigma\mV^\top$, $\mU$ has orthonormal columns, and \eqref{eq:needed-gen-V} holds, we have
    \begin{align*}
        \| \mA \vx_k - \vb\|^2
        &= \|\mA(\vx_k - \vx^*)\|^2 
         = \|\mSigma\ve_k\|^2 
        = \|\mSigma \mM^k \ve_0\|^2 \\
        &\leq \norm{\mSigma \mM}_\op^2 \|\mM^{k-1}\|_\op^2 \|\vx_0-\vx^*\|^2.
    \end{align*}
    This gives the first bound inside the minimum. 

    To obtain the remaining two bounds on the residual, we recall, from \Cref{lem:dynamics-of-ek}, that $\ve_{k+1} - \ve_k = -(\hat{\mA}\mV)^\top \mL^{-1}\hat{\mA}\mV \ve_k$ and $\ve_k = \mV^\top(\vx_k - \vx^*).$ Thus,
    \[
    \mV^\top (\vx_{k+1} - \vx_k) = -(\hat{\mA}\mV)^\top \mL^{-1}\hat{\mA}\mV \mV^\top(\vx_k - \vx^*).
    \]
    Multiplying both sides by $\hat{\mA}\mV$ and using that $\hat{\mA}\mV\mV^\top = \hat{\mA}$ and $(\hat{\mA}\mV)(\hat{\mA}\mV)^\top = \hat{\mA}\hat{\mA}^\top,$ we get
    \(
        \hat{\mA}(\vx_{k+1} - \vx_k) = -\hat{\mA}\hat{\mA}^\top \mL^{-1}\hat{\mA}(\vx_k - \vx^*),
    \) 
    and so 
    \[
        \hat{\mA}(\vx_{k+1} - \vx^*) = \big(\mI_m -\hat{\mA}\hat{\mA}^\top \mL^{-1}\big)\hat{\mA}(\vx_k - \vx^*)
    \]
    Now, from \Cref{claim:aux-L-hatA}, $\hat{\mA}\hat{\mA}^\top = \mL + \mL^\top - \mI_m,$ leading to
    \begin{align}
        \hat{\mA}(\vx_{k+1} - \vx^*) &= \big(\mI_m - (\mL + \mL^\top - \mI_m)\mL^{-1}\big)\hat{\mA}(\vx_k - \vx^*)\notag\\
        &= -(\mL - \mI_m)^\top \mL^{-1}\hat{\mA}(\vx_k - \vx^*). \label{eq:residual-identity} 
    \end{align}
    As a consequence,
    \begin{equation}\label{eq:move-rescaled-residual}
        \norm{\hat{\mA}(\vx_{k+1} - \vx^*)}^2 \leq \norm{\mL - \mI_m}_\op^2 \norm{\mL^{-1}\hat{\mA}(\vx_k - \vx^*)}^2. 
    \end{equation}
    Combining \eqref{eq:move-rescaled-residual} with \eqref{eq:tight-telescoping} and simplifying, we have
    \begin{equation}\notag
        \frac{1}{k+1}\sum_{i=0}^{k} \norm{\hat{\mA}(\vx_{i+1} - \vx^*)}^2 \leq \frac{\norm{\mL - \mI_m}_\op^2\norm{\vx_0 - \vx^*}^2}{k+1},
    \end{equation}
    so all that remains to prove the last bound is to observe that $\mA = \diag(\norm{\va_1}, \dots, \norm{\va_m})\hat{\mA},$ and so 
    \begin{equation}\label{eq:rescaling}
        \norm{\mA(\vx_{i+1} - \vx^*)} \leq \max_{i \in [m]}\norm{\va_i}\norm{\hat{\mA}(\vx_{i+1} - \vx^*)}.
    \end{equation} 

    For the remaining sublinear rate bound on the last iterate, observe first that combining \eqref{eq:telescoping-for-distance-to-opt}  and \eqref{eq:move-rescaled-residual} leads to $\norm{\hat{\mA}(\vx_{k+1} - \vx^*)}^2 \leq \norm{\mL - \mI_m}_\op^2 (\norm{\vx_k - \vx^*}^2 - \norm{\vx_{k+1} - \vx^*}^2),$ and so using \eqref{eq:rescaling}, we have
    \begin{equation}\notag
        \norm{\mA(\vx_{k+1} - \vx^*)}^2 \leq \max_{i \in [m]}\norm{\va_i}^2\norm{\mL - \mI_m}_\op^2 (\norm{\vx_k - \vx^*}^2 - \norm{\vx_{k+1} - \vx^*}^2). 
    \end{equation}
    By \eqref{eq:needed-gen-V} and using $\ve_k = \mM^k \ve_0$, the last inequality is equivalent to
    \begin{equation}\label{eq:telescoping-in-M}
        \norm{\mSigma \mM^{k+1}\ve_0}^2 \leq \max_{i \in [m]}\norm{\va_i}^2\norm{\mL - \mI_m}_\op^2 (\norm{\mM^k\ve_0}^2 - \norm{\mM^{k+1}\ve_0}^2).
    \end{equation}
    Note that \eqref{eq:telescoping-in-M} was derived for arbitrary $k \geq 0$ and arbitrary initial vector $\vx_0$, so it applies to arbitrary $\ve_0 \in \sR^r$ ($\mSigma$ and $\mM$ only depend on the matrix $\mA;$ they are independent of $\vx_0$). Further, we argue that $\mM \mV^\top\hat{{\va}}_1 = \vzero,$ so $\rank(\mM) \leq r -1.$ To see this, observe that $\hat{\mA}\hat{\va}_1 = (1, \innp{\hat{\va}_1, \hat{\va_2}}, \dots, \innp{\hat{\va_1}, \hat{\va}_m})^\top = \mL \vi_1$, where $\vi_1 := (1, 0, \dots, 0)^\top$. Thus,
    \begin{align*}
        \mM \mV^\top\hat{\va}_1 &= \mV^\top\hat{\va}_1 - (\hat{\mA}\mV)^\top \mL^{-1}\hat{\mA}\mV \mV^\top\hat{\va}_1\\
        &= \mV^\top\hat{\va}_1 - (\hat{\mA}\mV)^\top \mL^{-1}\hat{\mA}\hat{\va}_1\\
        &= \mV^\top\hat{\va}_1 - (\hat{\mA}\mV)^\top \mL^{-1}\mL\vi_1 = \vzero.
    \end{align*}
    Thus, for $\mP_1 = \mI_r - (\mV^\top\hat{\va}_1)(\mV^\top\hat{\va}_1)^\top,$ we have $\mM = \mM\mP_1.$ Note here that $\norm{\mV^\top \hat{\va}_1} = 1.$
    
    Applying \eqref{eq:telescoping-in-M} to each column of $\mP_1$ and summing, we get
    \begin{equation}\label{eq:telescoping-in-M-F}
        \norm{\mSigma \mM^{k+1}\mP_1}_F^2 \leq \max_{i \in [m]}\norm{\va_i}^2\norm{\mL - \mI_m}_\op^2 (\norm{\mM^k\mP_1}_F^2 - \norm{\mM^{k+1}\mP_1}_F^2).
    \end{equation}
    Since \eqref{eq:telescoping-in-M-F} applies to all $k \geq 0$, telescoping it and then dropping the non-positive term $- \norm{\mM^{k+1}\mP_1}_F^2$, we arrive at
    \begin{equation}\label{eq:M-space-after-telescoping}
        \sum_{i=1}^{k+1} \norm{\mSigma \mM^{i}\mP_1}_F^2 \leq \max_{i \in [m]}\norm{\va_i}^2\norm{\mL - \mI_m}_\op^2 \norm{\mM^0\mP_1}_F^2 \leq (r-1) \max_{i \in [m]}\norm{\va_i}^2 \norm{\mL - \mI_m}_\op^2, 
    \end{equation}
    where the last inequality follows from $\norm{\mP_1}_F^2 = \rank(\mP_1) = r-1.$ 
    Additionally, for $i \geq 1,$ 
    \[
    \norm{\mSigma \mM^{i+1}\mP_1}_F = \norm{\mSigma \mM^i \mM}_F \leq \norm{\mSigma \mM^{i}}_F\norm{\mM}_\op \leq \norm{\mSigma \mM^{i}}_F = \norm{\mSigma \mM^{i}\mP_1}_F,
    \] 
    and so \eqref{eq:M-space-after-telescoping} implies
    \begin{equation}\notag
        \norm{\mSigma \mM^{k+1}\mP_1}_F^2 \leq \frac{(r-1) \max_{i \in [m]}\norm{\va_i}^2 \norm{\mL - \mI_m}_\op^2}{k+1}. 
    \end{equation}
    To complete the proof, it remains to recall that $\mM^{k+1}\mP_1 = \mM^{k+1}$ and use \eqref{eq:tight-contraction} to deduce $\norm{\mSigma \mM^{k+1}\mP_1}_F^2 = \norm{\mSigma \mM^{k+1}}_F^2 \geq \norm{\mSigma \mM^{k+1}}_\op^2 \geq \frac{\norm{\mA(\vx_{k+1} - \vx^*)}^2}{\norm{\vx_0 - \vx^*}^2}$, where we have taken, without loss of generality, that $\vx_0 \neq \vx^*$ (otherwise the claimed bound holds trivially).  
\end{proof}

\subsection{More Interpretable Convergence Bounds}\label{sec:interpretable-bounds}

Although our main convergence results, stated in terms of $\norm{\mM^k}_\op$ are tight, they do not provide an explicit, interpretable, linear convergence. To obtain additional quantitative bounds on the convergence rate determined by $\norm{\mM^k}_\op$, we provide a relaxation that translates into an explicit convergence rate of the form $q^k,$ $q \in [0, 1)$, by bounding the operator norm of $\mM$. We discuss tightness of this relaxation, as well as the relaxed convergence bounds from  \Cref{thm:matrix-power-bound}, in the next subsection.  

\begin{lemma}\label{lem:spectrum-of-SigmaM}
    Given a matrix $\mA = \mU \mSigma \mV^\top,$ let $\mM = \mI_r - (\hat{\mA}\mV)^\top \mL^{-1}\hat{\mA}\mV$ and let $\mP_m := \mI_n - \hat{\va}_m \hat{\va}_m^\top$. Then:
    \begin{equation}\notag
        \norm{\mSigma \mM}_\op^2 \leq \min\Big\{\norm{\mA\mP_m}_\op^2\norm{\mM}_\op^2, \,  \frac{\max_{i \in [m]}\norm{\va_i}^2\norm{\mL - \mI_m}_\op^2\norm{\mA\mP_m}_\op^2}{\norm{\mA\mP_m}_\op^2 + \max_{i \in [m]}\norm{\va_i}^2\norm{\mL - \mI_m}_\op^2}\Big\}, 
    \end{equation}
    where we additionally have $\norm{\mA \mP_m}_\op^2 \leq \norm{\mA\mP_m}_F^2 = \sum_{i=1}^m \norm{\va_i}^2\big(1 - \innp{\hat{\va}_i, \hat{\va}_m}^2\big)$ and $\norm{\mA \mP_m}_\op \leq \sigma_1(\mA)$.
\end{lemma}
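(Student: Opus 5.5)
The plan rests on one structural fact: the last within-cycle update is the projection onto the hyperplane $\{\vx: \innp{\va_m, \vx} = \vb^{(m)}\}$. Hence after every full cycle ($k \geq 0$), $\vx_{k+1} - \vx^* = \vd_m = \vd_{m-1} - \innp{\hat{\va}_m, \vd_{m-1}}\hat{\va}_m$ is orthogonal to $\hat{\va}_m$. Equivalently, $\mP_m(\vx_{k+1} - \vx^*) = \vx_{k+1} - \vx^*$.

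I will work with a single cycle from an arbitrary $\ve_0 \in \sR^r$ with $\norm{\ve_0} = 1$, realized by $\vx_0 = \vx^* + \mV\ve_0$. This is legitimate because, as in the proof of \Cref{thm:matrix-power-bound}, $\norm{\mSigma\mM}_\op = \sup_{\norm{\ve_0}=1}\norm{\mSigma \mM \ve_0}$. By \Cref{lem:dynamics-of-ek} and $\mA = \mU\mSigma\mV^\top$, we have $\norm{\mSigma\mM\ve_0} = \norm{\mSigma \ve_1} = \norm{\mA(\vx_1 - \vx^*)}$.

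\emph{First bound.} Using the projection fact,
\[
\norm{\mA(\vx_1 - \vx^*)} = \norm{\mA\mP_m(\vx_1 - \vx^*)} \leq \norm{\mA\mP_m}_\op \norm{\vx_1 - \vx^*} = \norm{\mA\mP_m}_\op\norm{\mM\ve_0}.
\]
Taking the supremum over unit $\ve_0$ gives $\norm{\mSigma\mM}_\op^2 \leq \norm{\mA\mP_m}_\op^2\norm{\mM}_\op^2$.

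\emph{Second bound.} Set $p := \norm{\mA\mP_m}_\op^2$, $c := \max_{i}\norm{\va_i}^2\norm{\mL - \mI_m}_\op^2$, and $t := \norm{\ve_1}^2 \in [0,1]$. The previous step gives $\norm{\mA(\vx_1 - \vx^*)}^2 \leq p\,t$. Independently, combining \eqref{eq:move-rescaled-residual}, \eqref{eq:rescaling} and \eqref{eq:telescoping-for-distance-to-opt} with $k = 0$ gives
\[
\norm{\mA(\vx_1 - \vx^*)}^2 \leq c\,\norm{\mL^{-1}\hat{\mA}(\vx_0 - \vx^*)}^2 = c\,(1 - t).
\]
Therefore
\[
\norm{\mSigma\mM\ve_0}^2 \leq \min\{p t,\, c(1-t)\} \leq \max_{t \in [0,1]}\min\{pt,\, c(1-t)\} = \frac{pc}{p+c},
\]
where the maximum is attained at $t = c/(p+c)$. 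The degenerate case $p = c = 0$ is covered by the paper's convention $ab/(a+b) = 0$. Taking the supremum over $\ve_0$ yields the second term in the minimum.

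\emph{Auxiliary claims.} For the Frobenius bound, the $i$-th row of $\mA\mP_m$ is $\mP_m\va_i$ (since $\mP_m$ is symmetric), and $\norm{\mP_m \va_i}^2 = \norm{\va_i}^2(1 - \innp{\hat{\va}_i, \hat{\va}_m}^2)$. Summing over $i$ gives $\norm{\mA\mP_m}_F^2$, which upper bounds $\norm{\mA\mP_m}_\op^2$. Finally, $\norm{\mA\mP_m}_\op \leq \norm{\mA}_\op\norm{\mP_m}_\op \leq \sigma_1(\mA)$, because $\mP_m$ is an orthogonal projection.

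The only genuinely non-routine step is spotting that $\vx_{1} - \vx^*$ lies in the range of $\mP_m$, which is what brings $\mP_m$ into the bound. The interpolation between the two estimates via the one-parameter optimization over $t$ is then elementary.
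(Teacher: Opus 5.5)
Your proof is correct and follows the paper's argument. The first bound comes from the final projection placing $\vx_1 - \vx^*$ in the range of $\mP_m$, and the second combines this with the residual identity and the one-cycle telescoping identity. Your $\max_{t\in[0,1]}\min\{pt,\, c(1-t)\}$ step is the same as the paper's substitute-and-rearrange, and it also covers the degenerate cases $p=0$ or $c=0$ without a separate remark.
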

\begin{proof}
By \Cref{lem:dynamics-of-ek}, $\ve_1 = \mM \ve_0$, where $\ve_0 = \mV^\top(\vx_0-\vx^*)$. Kaczmarz update \eqref{eq:Kacmarz-update-normalized} ensures that the last equality in the system is solved exactly at the end of the cycle, so $\innp{\hat{\va}_m, \vx_1 - \vx^*} = 0.$ Let $\mP_m := \mI_n - \hat{\va}_m\hat{\va}_m^\top.$ Then $\mV \ve_1 = \mP_m \mV \ve_1.$ Equivalently, $\mV\mM \ve_0 = \mP_m \mV \mM \ve_0.$ Multiplying both sides from the left by $\mA$ and taking the operator norm, we have
\begin{align*}
    \norm{\mA \mV \mM \ve_0} &= \norm{\mA \mP_m \mV \mM \ve_0}\\
    &\leq \norm{\mA \mP_m}_\op \norm{\mV \mM \ve_0}\\
    &\leq \norm{\mA \mP_m}_\op \norm{\mM}_\op \norm{\ve_0}.
\end{align*}
Since the above inequality holds for arbitrary $\ve_0 \in \sR^r,$ we can, in particular, take the supremum over $\|\ve_0\| = 1,$ so we get $\norm{\mA \mV \mM}_\op \leq \norm{\mA \mP_m}_\op \norm{\mM}_\op.$ Finally, since $\mA = \mU \mSigma \mV^\top,$ we have $\mA \mV = \mU \mSigma$, and using orthonormality of the columns of $\mU,$ we get  $\norm{\mA \mV \mM}_\op = \norm{\mSigma \mM}_\op,$ leading to  $\norm{\mSigma \mM}_\op \leq \norm{\mA \mP_m}_\op \norm{\mM}_\op.$

For the remaining bound inside the minimum, suppose that $\norm{\mA \mP_m}_\op \neq 0,$ for otherwise the lemma claim holds trivially. Observe first that, as already noted by $\innp{\hat{\va}_m, \vx_1 - \vx^*} = 0,$ we have $\mA(\vx_1 - \vx^*) = \mA \mP_m (\vx_1 - \vx^*)$ and so 
\begin{equation}\label{eq:bnd-r1}
\norm{\mA(\vx_1 - \vx^*)} \leq \norm{\mA \mP_m}_\op \norm{\vx_1 - \vx^*}. 
\end{equation}
From \eqref{eq:residual-identity}, $\hat{\mA}(\vx_{k+1} - \vx^*) = -(\mL - \mI_m)^\top \mL^{-1}\hat{\mA}(\vx_k - \vx^*)$; thus, we have 
\[
\norm{\mA (\vx_1 - \vx^*)} \leq \max_{i \in [m]}\norm{\va_i}\norm{\hat{\mA}(\vx_1 - \vx^*)} \leq \max_{i \in [m]}\norm{\va_i}\norm{\mL - \mI_m}_\op \norm{\mL^{-1}\hat{\mA}(\vx_0 - \vx^*)}.
\]
But $\norm{\mL^{-1}\hat{\mA}(\vx_0 - \vx^*)}^2 = \norm{\mL^{-1}\hat{\mA}\mV \ve_0}^2 = \norm{\ve_0}^2 - \norm{\ve_1}^2,$ by \Cref{lem:dynamics-of-ek}, which is further equal to $\norm{\vx_0 - \vx^*}^2 - \norm{\vx_1 - \vx^*}^2.$ We can thus conclude that
\begin{align*}
\norm{\mA (\vx_1 - \vx^*)}^2 &\leq \max_{i \in [m]}\norm{\va_i}^2\norm{\mL - \mI_m}_\op^2 \big(\norm{\vx_0 - \vx^*}^2 - \norm{\vx_1 - \vx^*}^2\big)\\
&\leq \max_{i \in [m]}\norm{\va_i}^2\norm{\mL - \mI_m}_\op^2 \big(\norm{\vx_0 - \vx^*}^2 - \norm{\mA(\vx_1 - \vx^*)}^2/\norm{\mA \mP_m}_\op^2\big), 
\end{align*}
where the last inequality is by \eqref{eq:bnd-r1}. A rearrangement of the last inequality now gives
\begin{equation}\notag
    \norm{\mA (\vx_1 - \vx^*)}^2 \leq \frac{\max_{i \in [m]}\norm{\va_i}^2\norm{\mL - \mI_m}_\op^2\norm{\mA\mP_m}_\op^2}{\norm{\mA\mP_m}_\op^2 + \max_{i \in [m]}\norm{\va_i}^2\norm{\mL - \mI_m}_\op^2}\norm{\vx_0 - \vx^*}^2.
\end{equation}
To complete the proof, it remains to use again that $\mA (\vx_1 - \vx^*) = \mA \mV \ve_1 = \mA \mV \mM \ve_0 = \mU \mSigma \mM \ve_0$ and set $\ve_0$ to the unit top right singular vector of $\mSigma \mM.$ 

The identity $\norm{\mA\mP_m}_F^2 = \sum_{i=1}^m \norm{\va_i}^2\big(1 - \innp{\hat{\va}_i, \hat{\va}_m}^2\big)$ follows by a direct calculation. 
\end{proof}

One way to relax $\|\mM^k\|_\op$ is by using submultiplicativity and bounding the spectral norm of $\mM$, as shown in the following lemma.
\begin{lemma}\label{lem:spectrum-of-M}
    Let $\mM = \mI_r - (\hat{\mA}\mV)^\top \mL^{-1}\hat{\mA}\mV$. Then %
    \[\
    \|\mM\|_\op^2 \le\frac{\| \mL - \mI_m\|_\op^2}{\| \mL - \mI_m\|_\op^2+\sigma_r^2(\hat{\mA})} \leq \frac{\max_{i \in [m]}\norm{\va_i}^2\| \mL - \mI_m\|_\op^2}{\max_{i \in [m]}\norm{\va_i}^2\| \mL - \mI_m\|_\op^2+\sigma_r^2(\mA)}.
    \]
\end{lemma}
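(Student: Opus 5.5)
We mirror the argument used for the second bound in \Cref{lem:spectrum-of-SigmaM}, replacing $\mA\mP_m$ by the row-normalized matrix and relating $\norm{\hat{\mA}(\vx_1-\vx^*)}$ from below to $\norm{\vx_1-\vx^*}$ through $\sigma_r(\hat{\mA})$. Fix an arbitrary $\ve_0\in\sR^r$ and set $\vx_0=\vx^*+\mV\ve_0$, so that $\ve_1=\mM\ve_0$.

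\textbf{Upper bound on the residual.} By \eqref{eq:residual-identity} with $k=0$ and the energy identity in \Cref{lem:dynamics-of-ek},
\[
\norm{\hat{\mA}(\vx_1-\vx^*)}^2\le \norm{\mL-\mI_m}_\op^2\norm{\mL^{-1}\hat{\mA}\mV\ve_0}^2=\norm{\mL-\mI_m}_\op^2\big(\norm{\ve_0}^2-\norm{\ve_1}^2\big).
\]

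\textbf{Lower bound on the residual.} Since $\vx_1-\vx^*=\mV\ve_1\in\range(\mA^\top)=\range(\hat{\mA}^\top)$, and $\hat{\mA}$ restricted to its row space has smallest singular value $\sigma_r(\hat{\mA})>0$, we get $\norm{\hat{\mA}\mV\ve_1}\ge\sigma_r(\hat{\mA})\norm{\ve_1}$. Here $\hat{\mA}\mV$ has full column rank $r$ and its singular values are exactly the positive singular values of $\hat{\mA}$.

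\textbf{Rearrangement.} Combining the two bounds gives
\[
\sigma_r^2(\hat{\mA})\norm{\ve_1}^2\le\norm{\mL-\mI_m}_\op^2\big(\norm{\ve_0}^2-\norm{\ve_1}^2\big),
\]
so
\[
\norm{\mM\ve_0}^2\le\frac{\norm{\mL-\mI_m}_\op^2}{\norm{\mL-\mI_m}_\op^2+\sigma_r^2(\hat{\mA})}\norm{\ve_0}^2.
\]
Taking the supremum over unit $\ve_0$ yields the first inequality. Its denominator is positive because $\sigma_r(\hat{\mA})>0$.

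\textbf{Second inequality.} The map $t\mapsto t/(t+c)$ is increasing in $t\ge0$ and decreasing in $c>0$. Write $\alpha:=\max_{i\in[m]}\norm{\va_i}$. Multiplying numerator and denominator of the first bound by $\alpha^2$ gives
\[
\frac{\alpha^2\norm{\mL-\mI_m}_\op^2}{\alpha^2\norm{\mL-\mI_m}_\op^2+\alpha^2\sigma_r^2(\hat{\mA})}.
\]
So it suffices to show $\sigma_r(\mA)\le\alpha\,\sigma_r(\hat{\mA})$, which follows from $\mA=\mD\hat{\mA}$ with $\mD=\diag(\norm{\va_1},\dots,\norm{\va_m})$. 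For any $\vu$ in the common row space,
\[
\norm{\mA\vu}\le\alpha\norm{\hat{\mA}\vu}.
\]
The Courant--Fischer characterization of $\sigma_r$ over that $r$-dimensional space then gives $\sigma_r(\mA)\le\alpha\sigma_r(\hat{\mA})$. (This is the interval bound already quoted in the introduction.)

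The only step needing care is the lower bound $\norm{\hat{\mA}\mV\ve_1}\ge\sigma_r(\hat{\mA})\norm{\ve_1}$. It requires checking that $\hat{\mA}$ and $\mA$ share the row space spanned by $\mV$, so that the minimal singular value of $\hat{\mA}\mV$ is $\sigma_r(\hat{\mA})$ rather than zero. This holds because row scaling by a positive diagonal matrix preserves the row space. Everything else is algebra from \Cref{lem:dynamics-of-ek} and \eqref{eq:residual-identity}.
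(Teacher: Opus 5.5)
Your proof is correct and follows the paper's argument essentially step for step. You bound $\norm{\hat{\mA}\mV\mM\ve_0}^2$ above by $\norm{\mL-\mI_m}_\op^2(\norm{\ve_0}^2-\norm{\mM\ve_0}^2)$ via \eqref{eq:residual-identity} and the energy identity of \Cref{lem:dynamics-of-ek}, and below by $\sigma_r^2(\hat{\mA})\norm{\mM\ve_0}^2$ using that $\hat{\mA}\mV$ has full column rank; you then rearrange and take the supremum over $\ve_0$. The only addition over the paper is that you justify $\sigma_r(\mA)\le \max_{i\in[m]}\norm{\va_i}\,\sigma_r(\hat{\mA})$ from $\mA=\diag(\norm{\va_1},\dots,\norm{\va_m})\hat{\mA}$ and Courant--Fischer, whereas the paper simply asserts it.
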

\begin{proof}
Because $\mV$ spans the row space of $\mA,$ we have that $\hat{\mA}\mV$ has the full column rank $r,$ with minimum singular value $\sigma_r(\hat{\mA})\geq \frac{\sigma_r(\mA)}{\max_{i \in [m]}\norm{{\va}_i}}.$ Further, from \eqref{eq:residual-identity} (using \eqref{eq:needed-gen-V}), $\hat{\mA}\mV \mM\ve_0  = -(\mL - \mI_m)^\top\mL^{-1}\hat{\mA}\mV\ve_0.$ Thus, we get
\begin{equation}\label{eq:Me-ub}
    \norm{\hat{\mA}\mV \mM\ve_0}^2 \leq \norm{\mL - \mI_m}_\op^2 \norm{\mL^{-1}\hat{\mA}\mV\ve_0}^2 \leq \norm{\mL - \mI_m}_\op^2\big(\norm{\ve_0}^2 - \norm{\mM\ve_0}^2\big).
\end{equation}
Further, since $\norm{\hat{\mA}\mV \mM \ve_0} \geq \sigma_r(\hat{\mA})\norm{\mM \ve_0},$ combining with \eqref{eq:Me-ub}, we get \[(\sigma_r^2(\hat{\mA}) + \norm{\mL - \mI_m}_\op^2)\norm{\mM \ve_0}^2 \leq \norm{\mL - \mI_m}_\op^2 \norm{\ve_0}^2.\] Since $\ve_0$ can be chosen arbitrarily (as $\vx_0$ can be chosen arbitrarily), we can set it to the top unit right singular vector of $\mM$ to get $\|\mM\|_\op^2 \le\frac{\| \mL - \mI_m\|_\op^2}{\| \mL - \mI_m\|_\op^2+\sigma_r^2(\hat{\mA})}.$ The remaining inequality follows from $\sigma_r(\hat{\mA})\geq \frac{\sigma_r(\mA)}{\max_{i \in [m]}\norm{{\va}_i}}.$
\end{proof}

The preceding spectral-norm estimate gives an explicit consequence of \Cref{thm:matrix-power-bound}, stated in the \Cref{cor:row-correlation-bound}. This estimate is particularly insightful, since for matrix $\mA$ that has orthogonal rows, the right-hand side is zero, predicting convergence of Kaczmarz method in a single cycle, as expected. Moreover, the provided bound agrees with existing empirical observations. When the matrix rows are near-orthogonal, the inner products  $\innp{\hat{\va}_i, \hat{\va}_j}$ are small, and the stated corollary predicts fast convergence as long as $\sigma_r^2(\hat{\mA}) >> \sum_{i < j}\innp{\hat{\va}_i, \hat{\va}_j}^2.$ When the inner products $\innp{\hat{\va}_i, \hat{\va}_j}$ are close to one, meaning that the rows are near-parallel, the contraction factor $\frac{\sum_{1\le i < j \le m} \innp{\hat{\va}_i, \hat{\va}_j}^2}{\sum_{1\le i < j \le m} \innp{\hat{\va}_i, \hat{\va}_j}^2 + \sigma_{r}^2(\hat{\mA})}$ is of the order $1 - O(\frac{\sigma_r^2}{m^2 + \sigma_r^2}) \approx 1- \frac{1}{m^2}$ for $\sigma_r = \Theta(1),$ predicting slow convergence observed in practice (see also the examples in \cite{sun2021worst}).  Finally, to obtain a fully quantitative bound, we provide an upper estimate of $\norm{\mL - \mI_m}_\op$ in the following lemma. The second term improves upon a similar bound obtained in \cite{Oswald_2015}; see \Cref{sec:related-work}. 
\begin{lemma}\label{lem:L-I-bnd}
    Given a matrix $\hat{\mA} \in \sR^{m \times n}$ with $\rank(\mA) = r,$ let $\mL$ be defined as in \eqref{eq:L-and-M-defs}, where, as before $\mA = \mU \mSigma \mV^\top$ is the singular value decomposition of $\mA,$ $\hat{\mA}$ is its row-normalized form, and $\hat{\va}_i$ are the rows of $\hat{\mA}$. Then:
    \[
    \norm{\mL - \mI_m}_\op^2 \leq \min\Big\{\sum_{1\le i < j \le m} \innp{\hat{\va}_i, \hat{\va}_j}^2, \, \Big(\frac{\lceil\log_2(r)\rceil}{2} + 1\Big)^2\sigma_1^4(\hat{\mA})\Big\}.
    \]
\end{lemma}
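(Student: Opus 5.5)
The first bound is immediate: $\norm{\mL - \mI_m}_\op^2 \le \norm{\mL - \mI_m}_F^2$, and $\mL - \mI_m$ is the strictly lower triangular part of $\hat{\mA}\hat{\mA}^\top$ by \Cref{claim:aux-L-hatA}. Its squared Frobenius norm is therefore exactly $\sum_{i<j}\innp{\hat{\va}_i,\hat{\va}_j}^2$. All the work is in the second bound, for which I will follow the triangular-truncation idea of \cite{Oswald_2015}, adapted so that the depth of the recursion is governed by $r$ rather than $m$.

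Write $\mG := \hat{\mA}\hat{\mA}^\top = \mL + \mL^\top - \mI_m$. This is a PSD matrix of rank $r$ with unit diagonal and $\norm{\mG}_\op = \sigma_1^2(\hat{\mA})$. Then $\mL - \mI_m = \mathcal{T}(\mG)$, where $\mathcal{T}$ keeps the strictly lower triangular part. I plan to prove by induction on $\lceil \log_2 r\rceil$ that, for any PSD $\mG$ of rank at most $r$,
\[
\norm{\mathcal{T}(\mG)}_\op \le \Big(\tfrac{\lceil \log_2 r\rceil}{2}+1\Big)\norm{\mG}_\op .
\]
Squaring gives the claim.

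\emph{Base case $r=1$.} Here $\mG = \vu\vu^\top$, and the strictly lower part of a rank-one PSD matrix has norm at most $\norm{\vu}^2 = \norm{\mG}_\op$, which matches the factor $1$. A cleaner route to the same constant, which also covers the general setting, is the identity $\mathcal{T}(\mG) = \tfrac12\big(\mG - \mathrm{D}(\mG)\big) + \tfrac12 \mathcal{S}(\mG)$. Here $\mathrm{D}$ is the diagonal part and $\mathcal{S}(\mG) = \mathcal{T}(\mG) - \mathcal{T}(\mG)^\top$ is skew-symmetric. For PSD $\mG$ we have $\norm{\mG - \mathrm{D}(\mG)}_\op \le \norm{\mG}_\op$, since $0\preceq \mathrm{D}(\mG)\preceq \norm{\mG}_\op \mI$ and $\mG\succeq 0$. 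So the main task is bounding the skew part by $(\lceil\log_2 r\rceil + 1)\norm{\mG}_\op$.

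\emph{Inductive step.} To control the skew part with rank dependence, I would write $\mG = \mathbf{B}\mathbf{B}^\top$ with $\mathbf{B} = \hat{\mA}\mV \in \R^{m\times r}$. Then split the $r$ columns into two halves $\mathbf{B}_1,\mathbf{B}_2$ of at most $\lceil r/2\rceil$ columns each, so that $\mG = \mathbf{B}_1\mathbf{B}_1^\top + \mathbf{B}_2\mathbf{B}_2^\top$. The trouble is that $\mathcal{T}$ is linear, so this gives only additive control, and summing the two halves doubles the constant rather than adding $1/2$ per level. The Oswald-style argument instead splits the \emph{row index set} into consecutive blocks, $\mathcal{T}(\mG) = \begin{pmatrix}\mathcal{T}(\mG_{11}) & 0\\ \mG_{21} & \mathcal{T}(\mG_{22})\end{pmatrix}$. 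The diagonal blocks act on orthogonal coordinates, so they contribute the maximum of their norms, not the sum. The off-diagonal block satisfies $\norm{\mG_{21}}_\op \le \tfrac12\norm{\mG}_\op$ after centering, using that for PSD $\begin{pmatrix}\mG_{11}&\mG_{12}\\ \mG_{21}&\mG_{22}\end{pmatrix}$ one has $\norm{\mG_{21}}\le \tfrac12(\norm{\mG_{11}}+\norm{\mG_{22}})$ in a suitably refined form. This yields a gain of $\tfrac12\norm{\mG}_\op$ per level of recursion.

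The main obstacle is making the number of levels $\lceil\log_2 r\rceil$ instead of $\lceil \log_2 m\rceil$. My plan is to choose the split point adaptively so that each diagonal block $\mG_{11},\mG_{22}$ has ``effective rank'' at most $\lceil r/2\rceil$. Concretely, I would pick the split to balance $\mathrm{tr}$ of a projection, for instance balancing the trace of the projector $\mathbf{B}(\mathbf{B}^\top\mathbf{B})^{-1}\mathbf{B}^\top$, which equals $r$. The recursion then terminates once blocks are rank one, where the base-case bound applies. If the rank-halving split turns out not to be exactly achievable, the fallback is to show that blocks with rank at most $1$ still cost only $\norm{\mG}_\op$, and to accept a slightly coarser bookkeeping that still gives the stated constant.
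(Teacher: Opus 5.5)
Your first bound is correct. The skeleton you settle on for the second bound is also exactly the paper's: split the index set into two contiguous halves, note that the two diagonal triangles act on disjoint coordinates so only the larger one counts, and bound the off-diagonal rectangle $\mG_{21}$ by $\tfrac12\norm{\mG}_\op$. The clean justification for that last point is the diagonal shift. For PSD $\mG$ one has $\norm{\mG - \tfrac12\norm{\mG}_\op\mI_m}_\op \le \tfrac12\norm{\mG}_\op$, and the shift does not touch off-diagonal blocks. The inequality $\norm{\mG_{21}}_\op\le\tfrac12(\norm{\mG_{11}}_\op+\norm{\mG_{22}}_\op)$ that you quote only gives $\norm{\mG}_\op$.

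The genuine gap is the step that is supposed to cap the depth at $\lceil\log_2 r\rceil$. Your induction on rank needs the diagonal blocks to have rank at most $\lceil r/2\rceil$, but principal submatrices on contiguous index sets do not lose rank. If the $\hat{\va}_i$ are in general position in $\range(\mA^\top)$, every block of at least $r$ consecutive rows has rank $r$. So once $m > r+1$, no split point halves the rank, and rank-one blocks appear only at block size one, i.e., after about $\log_2 m$ levels. Balancing the trace of $\mathbf{B}(\mathbf{B}^\top\mathbf{B})^{-1}\mathbf{B}^\top$ equalizes leverage, not rank. The recursion therefore never lands in your rank-one base case, and the ``fallback'' you describe is not an argument.

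The missing idea is to stop the recursion by block \emph{size} and to use the unit diagonal, which your reformulation for arbitrary PSD $\mG$ discards. The paper halves by row count $\lceil\log_2 r\rceil$ times, so every remaining diagonal triangle has side at most $q=\lceil m/r\rceil$. It then bounds each such triangle by its Frobenius norm. Its entries are cosines, so its norm is at most $\sqrt{q(q-1)/2}\le q-1\le m/r$. Moreover, $m/r=\norm{\hat{\mA}}_F^2/r=\frac1r\sum_{i=1}^r\sigma_i^2(\hat{\mA})\le\sigma_1^2(\hat{\mA})$. Adding $\lceil\log_2 r\rceil$ levels at $\tfrac12\sigma_1^2(\hat{\mA})$ each gives the stated constant.

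Your leverage idea can be pushed in the same direction. A block $b$ with leverage sum $\ell(b)$ has $\mathrm{tr}(\mG_b)\le\ell(b)\norm{\mG}_\op$, hence $\norm{\mathcal{T}(\mG_b)}_\op\le\mathrm{tr}(\mG_b)/\sqrt2$. However, contiguous splits overshoot, leaving blocks whose leverage exceeds $1$ by an additive constant. This route therefore gives an $O(\log r)$ bound, but with a worse additive term than the $+1$ in the statement.
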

\begin{proof}
    The first bound in the minimum is immediate, as
    $$\|\mL - \mI_m \|_\op^2 \le \|\mL - \mI_m \|_{F}^2 = \sum_{1\le i < j \le m} \innp{\hat{\va}_i, \hat{\va}_j}^2.$$

    For the second bound in the minimum, we provide a recursive argument that splits the strictly lower triangle in $\mL- \mI_m$ into sets of non-overlapping submatrices. In particular, observe that all non-zero entries of $\mL - \mI_m$ are under the main diagonal, with entry $(i, j)$ with $i > j$ equal to $\innp{\hat{\va}_i, \hat{\va}_j}.$ At the same entry position, the matrix $\hat{\mA}{\hat{\mA}}^\top$ takes the same value. Observe further that
    \begin{equation}\label{eq:half-singular-val}
        \norm{\hat{\mA}\hat{\mA}^\top - \frac{\sigma_1^2(\hat{\mA})}{2}\mI_m}_\op \leq \frac{\sigma_1^2(\hat{\mA})}{2}. 
    \end{equation}
    Let $c = \lfloor \frac{m + 1}{2}\rfloor,$ and consider the rectangular submatrix of $\mL - \mI_m,$ obtained by selecting rows $c+1, \dots, m$ and columns $1, 2, \dots, c$. By triangle inequality, $\norm{\mL - \mI_m}_\op$ is at most the sum of the operator norms of the selected rectangular matrix and of the matrix that remains when the selected entries are subtracted from $\mL - \mI_m$. The former is at most  $\frac{\sigma_1^2(\hat{\mA})}{2},$ since the rectangular matrix is also a submatrix of $\hat{\mA}\hat{\mA}^\top - \frac{\sigma_1^2(\hat{\mA})}{2}\mI_m$. The latter matrix now has two nonzero triangles under the main diagonal, with non-overlapping row/column indices (the first one is spanned by rows $1, 2, \dots, c$ and columns $1, \dots, c$, the second is spanned by rows $c + 1, \dots, m$ and columns $c+1, \dots, m$). We now recursively apply the described partitioning to the induced triangles. The key insight is that all inscribed rectangles at the same level of recursion have non-overlapping row/columns and are submatrices of $\hat{\mA}\hat{\mA}^\top - \frac{\sigma_1^2(\hat{\mA})}{2}\mI_m;$ thus each recursive level adds $\frac{\sigma_1^2(\hat{\mA})}{2}.$ We stop the recursion when all induced triangles under the diagonal have at most $q := \big\lceil \frac{1}{r} \sum_{i=1}^r \sigma_i^2(\hat{\mA}) \big\rceil$ rows and at most $q$ columns. Because 
    \[
    m = \norm{\hat{\mA}}_F^2 = \sum_{i=1}^r \sigma_i^2(\hat{\mA}) = r \frac{1}{r} \sum_{i=1}^r \sigma_i^2(\hat{\mA}),
    \]
    the recursion does not reach depth higher than $\big\lceil\log_2\big( \frac{m}{q}\big)\big\rceil \leq \lceil\log_2(r)\rceil.$ As a result, the combined contribution to $\norm{\mL - \mI_m}_\op$ of all the rectangles is at most $\lceil\log_2(r)\rceil \frac{\sigma_1^2(\hat{\mA})}{2}.$ 

    It remains to bound the operator norm of the remaining small triangles (with sides of size at most $q = \big\lceil \frac{1}{r} \sum_{i=1}^r \sigma_i^2(\hat{\mA}) \big\rceil$) under the diagonal. Notice that these triangles are non-overlapping, so their combined operator norm is bounded by the maximum operator norm over all these triangles. For each small triangle we can bound the operator norm by the Frobenius norm; since each non-zero element is of the form $\innp{\hat{\va}_i, \hat{\va}_j}$ (with $\innp{\hat{\va}_i, \hat{\va}_j}^2 \leq 1$) and there are at most $\frac{q(q-1)}{2}$ nonzero elements (since, recall, we stopped partitioning as soon as all triangle sides were at most $q$), the operator norm of the contribution of all triangles is at most $\sqrt{\frac{q(q-1)}{2}} \leq q-1 \leq \frac{1}{r} \sum_{i=1}^r \sigma_i^2(\hat{\mA}).$

    Combining with the bound on the contribution from all the rectangles, we get $\norm{\mL - \mI_m}_\op \leq \lceil\log_2(r)\rceil \frac{\sigma_1^2(\hat{\mA})}{2} + \frac{1}{r} \sum_{i=1}^r \sigma_i^2(\hat{\mA}) \leq (\lceil\log_2(r)\rceil/2 + 1)\sigma_1^2(\hat{\mA}),$ as  $\frac{1}{r} \sum_{i=1}^r \sigma_i^2(\hat{\mA}) \leq \sigma_1^2(\hat{\mA}).$ 
\end{proof}
We are now ready to state and prove the complete, quantitative bound. 
\begin{corollary}[Row-correlation bound]\label{cor:row-correlation-bound}
    Given a consistent linear system \eqref{eq:main-prob} with $\rank(\mA) = r$, the squared residual of classical Kaczmarz method \eqref{eq:Kaczmarz-update-full}, $\| \mA \vx_k - \vb\|^2 = \|\mA(\vx_k- \vx^*)\|^2$, converges geometrically. Specifically, for $k \geq 1,$
    \begin{align*}
    \| \mA \vx_k - \vb\|^2 \le \min\bigg\{&  \frac{\max_{i \in [m]}\norm{\va_i}^2\norm{\mL - \mI_m}_\op^2\norm{\mA\mP_m}_\op^2}{\norm{\mA\mP_m}_\op^2 + \max_{i \in [m]}\norm{\va_i}^2\norm{\mL - \mI_m}_\op^2}\bigg(\frac{\|\mL - \mI_m \|_\op^2}{\|\mL - \mI_m \|_\op^2 + \sigma_r^2(\hat{\mA})}\bigg)^{k-1},\\
    & \norm{\mA\mP_m}_\op^2 \bigg(\frac{\|\mL - \mI_m \|_\op^2}{\|\mL - \mI_m \|_\op^2 + \sigma_r^2(\hat{\mA})}\bigg)^{k}\bigg\}\|\vx_0 -\vx^* \|^2, \; \text{ where }\\
    \norm{\mA \mP_m}_\op^2 &\leq \min\Big\{\sum_{i=1}^m \norm{\va_i}^2\big(1 - \innp{\hat{\va}_i, \hat{\va}_m}^2\big), \sigma_1^2(\mA)\Big\}, \\
    \|\mL - \mI_m \|_\op^2 &\le \min\Big\{\sum_{1\le i < j \le m} \innp{\hat{\va}_i, \hat{\va}_j}^2, \, (1 + \lceil\log_2(r)\rceil/2)^2\sigma_1^4(\hat{\mA})\Big\}.
    \end{align*}
\end{corollary}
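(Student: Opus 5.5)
The plan is to combine the exact identity from \Cref{thm:matrix-power-bound} with the spectral estimates of \Cref{lem:spectrum-of-SigmaM}, \Cref{lem:spectrum-of-M}, and \Cref{lem:L-I-bnd}. The main tool is submultiplicativity of the operator norm.

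From the proof of \Cref{thm:matrix-power-bound} we have
\[
\|\mA\vx_k - \vb\|^2 = \|\mSigma\mM^k\ve_0\|^2 \le \|\mSigma\mM\|_\op^2\|\mM\|_\op^{2(k-1)}\|\vx_0-\vx^*\|^2 .
\]
This uses $\|\mM^{k-1}\|_\op\le\|\mM\|_\op^{k-1}$ together with $\|\ve_0\| = \|\vx_0-\vx^*\|$ from \eqref{eq:needed-gen-V}.

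I will then bound $\|\mSigma\mM\|_\op^2$ by the minimum in \Cref{lem:spectrum-of-SigmaM} and treat the two branches separately.
\begin{itemize}
\item \emph{Second branch.} Here $\|\mSigma\mM\|_\op^2$ is at most the harmonic-type fraction involving $\max_i\|\va_i\|^2\|\mL-\mI_m\|_\op^2$ and $\|\mA\mP_m\|_\op^2$. Multiplying by $\|\mM\|_\op^{2(k-1)}$ and using the first inequality of \Cref{lem:spectrum-of-M} gives the first term of the minimum, with exponent $k-1$.
\item \emph{First branch.} Here $\|\mSigma\mM\|_\op^2\le\|\mA\mP_m\|_\op^2\|\mM\|_\op^2$, so the total power of $\|\mM\|_\op^2$ becomes $k$. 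Applying \Cref{lem:spectrum-of-M} again gives the second term.
\end{itemize}
Since both estimates hold simultaneously, the bound is their minimum. Geometric convergence follows because the ratio $\|\mL-\mI_m\|_\op^2/(\|\mL-\mI_m\|_\op^2+\sigma_r^2(\hat{\mA}))$ is strictly less than $1$, since $\sigma_r(\hat{\mA})>0$.

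The auxiliary estimates are all inherited. The bound on $\|\mA\mP_m\|_\op^2$ is the final sentence of \Cref{lem:spectrum-of-SigmaM}, which gives both the Frobenius identity and $\|\mA\mP_m\|_\op\le\sigma_1(\mA)$. The bound on $\|\mL-\mI_m\|_\op^2$ is exactly \Cref{lem:L-I-bnd}.

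The only point needing care is monotonicity. The contraction factor $t/(t+\sigma_r^2(\hat{\mA}))$ is increasing in $t=\|\mL-\mI_m\|_\op^2$, and the prefactor $ct\,p/(p+ct)$ is increasing in both $t$ and $p=\|\mA\mP_m\|_\op^2$. So the reader may substitute the stated upper bounds on $t$ and $p$ without invalidating the inequality.

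The degenerate cases are handled by convention. If $\mL=\mI_m$ or $\mA\mP_m=0$, both terms vanish, and the convention $ab/(a+b)=0$ covers the fraction. I expect no real obstacle: the argument is a direct chaining of earlier lemmas, and the monotonicity check is the only step beyond bookkeeping.
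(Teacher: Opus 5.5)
Your proposal is correct and matches the paper's proof. The paper likewise starts from $\|\mA\vx_k-\vb\|^2\le\|\mSigma\mM\|_\op^2\|\mM\|_\op^{2(k-1)}\|\vx_0-\vx^*\|^2$ (from \Cref{thm:matrix-power-bound} plus submultiplicativity), then substitutes \Cref{lem:spectrum-of-SigmaM}, \Cref{lem:spectrum-of-M}, and \Cref{lem:L-I-bnd} using monotonicity of $t\mapsto t/(t+a)$; your explicit two-branch split and your check that the prefactor is also increasing in $\|\mA\mP_m\|_\op^2$ just spell out steps the paper leaves implicit.
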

\begin{proof}
    By \Cref{thm:matrix-power-bound} and submultiplicativity of the spectral norm,
    \begin{align*}
        \| \mA \vx_k - \vb\|^2
        &\le \norm{\mSigma \mM}_\op^2 \|\mM^{k-1}\|_\op^2 \|\vx_0-\vx^*\|^2 \\
        &\le \norm{\mSigma \mM}_\op^2 \|\mM\|_\op^{2(k-1)} \|\vx_0-\vx^*\|^2.
    \end{align*}
    It remains to use \Cref{lem:spectrum-of-SigmaM} and \Cref{lem:spectrum-of-M} to bound $\norm{\mSigma \mM}_\op^2, \|\mM\|_\op^{2}$, respectively, and \Cref{lem:L-I-bnd} to bound $\norm{\mL - \mI_m}_\op.$  
    Since $f_a(t) = \frac{t}{t+a}$ is monotonically increasing in $t$ for $a > 0$, substituting these bounds gives the claimed result.
\end{proof}

\begin{remark}
    We briefly remark that the relaxation for $\|\mL - \mI_m \|_\op^2$ from \Cref{lem:L-I-bnd}, similar to \Cref{cor:row-correlation-bound}, also leads to sublinear rate bounds expressed in terms of row correlations and/or matrix spectrum and rank, by substituting this inequality into the bounds from \Cref{thm:matrix-power-bound}. 
\end{remark}

\begin{remark}\label{rem:convergence-of-the-iterates}
    The quantitative bounds on $\norm{\mM}_\op$ further imply iterate convergence bounds, as, by \Cref{thm:matrix-power-bound}, $\norm{\vx_k - \vx^*} \leq \norm{\mM^k}_\op \norm{\vx_0 - \vx^*} \leq \norm{\mM}_\op^k \norm{\vx_0 - \vx^*}$, and we can similarly use \Cref{lem:spectrum-of-M} to obtain an upper estimate of $\norm{\mM}_\op$ to obtain a fully quantifiable convergence rate estimate.   
\end{remark}

\subsection{Discussion of Tightness of Provided Convergence Bounds}\label{sec:tightness}

The provided convergence bounds \eqref{eq:tight-contraction}, \eqref{eq:tight-telescoping} in \Cref{thm:matrix-power-bound} are instance-tight, up to the choice of the initialization, but are not fully interpretable in terms of quantities such as the correlations among the matrix vectors, which have long been known to affect the convergence of Kaczmarz method. On the other hand, the more explicit bounds dependent on row correlations, the matrix rank, and its smallest nonzero singular value, provided in \Cref{thm:matrix-power-bound} and in \Cref{cor:row-correlation-bound} are based on suitable relaxations of the provided tight bounds. We now discuss what those bounds predict in simple special cases and their worst-case tightness, based on simple examples. 

\paragraph{Tightness of convergence bounds for parallel row vectors.} When the row vectors $\va_i$ are all parallel to each other (so $|\innp{\hat{\va}_i, \hat{\va}_j}| = 1$ for all $i, j \in [m]$), the provided linear convergence bounds all correctly predict the method's convergence in a single cycle. The reason is that \Cref{lem:spectrum-of-SigmaM} bounds the prefactor $\norm{\mSigma \mM}_\op^2$ proportional to $\norm{\mA\mP_m}_\op^2 \leq \sum_{i=1}^m \norm{\va_i}^2\big(1 - \innp{\hat{\va}_i, \hat{\va}_m}^2\big) = 0,$ so for any $k \geq 1,$ the stated bound on the residual is zero. Additionally, the bound from \Cref{thm:matrix-power-bound} also directly provides a zero on the right-hand side through the second term in the minimum, as in this case $r:= \rank(\mA) = 1.$

\paragraph{Tightness of convergence bounds for orthogonal row vectors.} Kaczmarz method is also known to converge in a single cycle when the rows $\va_i$ are mutually orthogonal (i.e., $\innp{\hat{\va}_i, \hat{\va}_j} = 0$ for all $i \neq j,$ $i, j \in [m]$). In this case, the contraction factor $\frac{\sum_{1\le i < j \le m} \innp{\hat{\va}_i, \hat{\va}_j}^2}{\sum_{1\le i < j \le m} \innp{\hat{\va}_i, \hat{\va}_j}^2 + \sigma_r^2(\hat{\mA})}$ from \Cref{cor:row-correlation-bound} equals zero, and the stated bound correctly predicts convergence in a single cycle. In fact, in this case, all stated bounds involving $\norm{\mL - \mI_m}_\op$ correctly predict convergence in a single cycle, as, based on the bound from \Cref{cor:row-correlation-bound}, $\norm{\mL - \mI_m}_\op = 0.$

\paragraph{Tightness of the convergence bounds for neither parallel nor orthogonal vectors.} First, to evaluate tightness of the convergence bound prefactors obtained in \Cref{lem:spectrum-of-SigmaM}, we consider the following $2\times 2$ example: given $c \in (0, 1),$ define
\begin{equation}\label{eq:2x2-example}
    \mA_c = \begin{pmatrix}
        1 & 0\\
        c & \sqrt{1-c^2}
    \end{pmatrix},\quad  \vb = \vzero,\quad  \vx_0 = (0, 1)^\top.
\end{equation}
Observe that for this example, $\mA = \hat{\mA}$ and the rows are neither parallel nor orthogonal.  
Kaczmarz updates in this case lead to $\vx_k = c^{2k-1}(-\sqrt{1-c^2}, c)^\top$ and $\norm{\mA_c \vx_k}^2 = (1-c^2)c^{4k-2}$, for all $k \geq 1$. Further, a direct calculation gives $\norm{\mL - \mI_2}_\op^2 = c^2,$ $\norm{\mA \mP_2}_\op^2 = 1- c^2,$ $\norm{\mM}_\op^2 = c^2,$ and $\norm{\mSigma\mM}_\op^2 = c^2(1-c^2).$ Thus, again by a direct calculation, both bounds from \Cref{lem:spectrum-of-SigmaM} equal $\norm{\mSigma\mM}_\op^2 = c^2(1-c^2)$ and are thus tight:
\begin{equation}\notag
    (1-c^2)c^2 = \norm{\mA\mP_2}_\op^2 \norm{\mM}_\op^2 = \frac{\norm{\mL - \mI_2}_\op^2\norm{\mA\mP_2}_\op^2}{\norm{\mL - \mI_2}_\op^2 + \norm{\mA\mP_2}_\op^2},
\end{equation}
As a result, the linear rate bound in \Cref{cor:row-correlation-bound} is tight (i.e., holds with equality) for $k = 1$. 

Moreover, for $k \geq 2,$ since $\norm{\mSigma\mM}_\op^2\norm{\mM^{k-1}}_\op^2 = (1- c^2)c^{4k-4}$ and, as already noted, $\norm{\mA_c \vx_k}^2 = (1-c^2)c^{4k-2}$, the first bound in \Cref{thm:matrix-power-bound} is within a factor $1/c^2$ of the true value of the squared residual for $k \geq 2.$ This factor can be made arbitrarily close to one by letting $c \to 1$ (but not equal to one). The sublinear rate bound for the average residual is also within a constant factor of the true value in this case. In particular, the exact value of the averaged residual is $\frac{1}{k}\sum_{i=1}^k \norm{\mA_c \vx_i}^2 = \frac{c^2}{1+c^2}\frac{1-c^{4k}}{k}$, whereas the bound from \Cref{thm:matrix-power-bound} leads to $\frac{1}{k}\sum_{i=1}^k \norm{\mA_c \vx_i}^2 \leq \frac{c^2}{k}.$ This predicted bound is within a factor $\frac{1 + c^2}{1 - c^{4k}}$ of the true value and can be made arbitrarily close to one for any $k \geq 1$  by letting $c \to 0$ (but not equal to zero).

Finally, on the provided example, \Cref{cor:row-correlation-bound} leads to the bound $\norm{\mA_c\vx_k}^2 \leq c^2(1-c^2)\big(\frac{c^2}{c^2 + 1 - c}\big)^{k-1} = c^2(1-c^2)\big(1-q_{\mathrm{bnd}}\big)^{k-1},$ $q_{\mathrm{bnd}} = \frac{1-c}{c^2 + 1 - c},$ whereas $\norm{\mA_c \vx_k}^2 = (1-c^2)c^{4k-2} = c^2(1-c^2)c^{4(k-1)} = c^2(1-c^2)(1 - q_{\mathrm{true}})^{k-1},$ $q_{\mathrm{true}} = 1 - c^4$. Thus, the predicted rate of convergence $q_{\mathrm{bnd}}$ is within a factor $ \frac{q_{\mathrm{true}}}{q_{\mathrm{bnd}}} = (1 + c + c^2 + c^3)(c^2 + 1 - c) = 1 + c^2 + c^3 + c^5$ of the true rate $q_{\mathrm{true}}$ for any $k \geq 1,$ and, for any value of $c \in (0, 1),$ $q_{\mathrm{bnd}} \leq q_{\mathrm{true}} \leq 4 q_\mathrm{bnd}.$ Letting $c \to 0$ (but not equal to zero), this bound can be made arbitrarily close to one.   

Finally, fix $k \geq 1$ and set $c^2 = 1 - 1/(2k).$ Then $\norm{\mA\vx_k}^2 = \frac{1}{2k}(1 - 1/(2k))^{2k-1} \geq 1/(2ek).$ In particular, the order of decay $1/k$ cannot be improved in the worst case. On the same example, the bound from \Cref{cor:row-correlation-bound} is at most $c^2(1-c^2)(\frac{c^2}{c^2 + 1 - c})^{k-1} \leq \frac{1}{2k},$ thus within a factor $e$ of the true residual. In other words, the complete  bound (involving both the prefactor and $(1- q_{\mathrm{bnd}})^{k-1}$) is tight up to an absolute constant ($\leq e \approx 2.718$) in the worst case. 

\paragraph{The role of matrix conditioning and rank.} Finally, since in the provided $2\times 2$ example tightness of the sublinear rate bound was argued by making rows close to being parallel ($c \to 1$), we provide a $3 \times 3$ example for which the rows are far from being parallel, yet convergence is arbitrarily slow and the sublinear rate bound is tight up to a universal constant in the worst-case, for any fixed value of $k$. In particular, for $\eta \in (0, 1/12],$ consider the family of matrices
\begin{equation}
    \mA_\eta := \begin{pmatrix}
        \sqrt{1-\eta} & 0 & \sqrt{\eta}\\
        -\frac{1}{2}\sqrt{1-\eta} & \frac{\sqrt{3}}{2}\sqrt{1-\eta} &\sqrt{\eta}\\
        -\frac{1}{2}\sqrt{1-\eta} & -\frac{\sqrt{3}}{2}\sqrt{1-\eta} &\sqrt{\eta}
    \end{pmatrix}.
\end{equation}
For this matrix family, all rows have unit norm, and all row pairs $i \neq j$ have $\innp{\va_i, \va_j} = c := \frac{3\eta - 1}{2} \in (-1/2, - 3/8],$ thus are far from being either parallel or orthogonal, and all entries of $\mL$ under the main diagonal are equal to $c$. Additionally, the squared singular values of $\mA_\eta$ are $\sigma_1^2(\mA_\eta) = \sigma_2^2(\mA_\eta) = \frac{3}{2}(1-\eta),$ $\sigma_3^2(\mA_\eta) = 3\eta,$ so the minimum singular value becomes arbitrarily small as $\eta$ is reduced towards zero. 

We argued in \eqref{eq:residual-identity} (recalling here that for the considered example, $\hat{\mA}_\eta = \mA_\eta$) that ${\mA}_\eta(\vx_{k+1} - \vx^*) = -(\mL - \mI)^\top \mL^{-1}{\mA}_\eta(\vx_k - \vx^*).$ A direct calculation shows that the matrix $-(\mL - \mI)^\top \mL^{-1}$, which determines the evolution of the residual, is
\begin{equation}
    -(\mL - \mI)^\top \mL^{-1} = \begin{pmatrix}
        2c^2 - c^3 & c^2 - c & -c\\
        c^2 - c^3 & c^2 & -c\\
        0 & 0 & 0
    \end{pmatrix}.
\end{equation}
This matrix has two nonzero eigenvalues, both of which are roots of $f(z) = z^2 - (3c^2 - c^3)z + c^3,$ and one of which is positive, the other negative (since $c < 0$). Let the positive root be denoted by $\lambda_\eta.$ We claim that $\lambda_\eta \geq 1- 6\eta.$ Indeed, this follows from $1 - 6 \eta \geq \frac{1}{2} > 0$ (as $0 < \eta \leq \frac{1}{12}$) and $f(1-6\eta) = - \frac{9}{4}\eta^2(5-30\eta + 9\eta^2)<0.$  

Now choose $\vx_0$ so that $\mA_{\eta}(\vx_0 - \vx^*)$ is an eigenvector of $-(\mL - \mI)^\top \mL^{-1}$ corresponding to the eigenvalue $\lambda_\eta \geq 1- 6\eta,$ where we choose $\vb \in \range(\mA_\eta), \vb \neq \mA_\eta \vx_0$ (for instance, $\vb = \vzero,$ in which case $\vx^* = \vzero$). Then $\norm{\mA(\vx_k - \vx^*)}^2 = \lambda_\eta^{2k}\norm{\mA(\vx_0 - \vx^*)}^2 \geq 3\eta \lambda_\eta^{2k}\norm{\vx_0 - \vx^*}^2$, as $\sigma_3^2(\mA_\eta) = 3\eta.$ Fix any $k \geq 1$ and set $\eta = \frac{1}{12k}.$ Then $\norm{\mA(\vx_k - \vx^*)}^2 \geq \frac{1}{4k}\big(1 - \frac{1}{2k}\big)^{2k}\norm{\vx_0 - \vx^*}^2 \geq \frac{\norm{\vx_0 - \vx^*}^2}{16k}.$ On the other hand, $\norm{\mL - \mI}_\op^2 = \frac{3 + \sqrt{5}}{2}c^2,$ so the last-iterate sublinear rate bound from \Cref{thm:matrix-power-bound} gives $\norm{\mA(\vx_k - \vx^*)}^2 \leq \frac{2\norm{\mL - \mI}_\op^2\norm{\vx_0 - \vx^*}^2}{k} \leq \frac{(3 + \sqrt{5})\norm{\vx_0 - \vx^*}^2}{4k}$. In other words, this bound is within a constant factor of the true value of the residual, and the worst-case sublinear rate $1/k$ cannot be improved even when the rows of the input matrix are far from being either parallel or orthogonal to each other. Another way to view this example is through the lens of the minimum singular value $\sigma_3^2(\mA_\eta) = 3\eta,$ which is the main culprit for the linear rate predicted by the bounds in \Cref{thm:matrix-power-bound} and \Cref{cor:row-correlation-bound} becoming excessively slow and, as a result, the residual value is within a constant factor of the bound predicted by the sublinear rate $1/k$.    

Further, the convergence rate $q_{\mathrm{bnd}}$ predicted by \Cref{cor:row-correlation-bound} is $q_{\mathrm{bnd}} = \frac{3\eta}{3c^2 + 3\eta} = \frac{4\eta}{1 - 2\eta + 9 \eta^2} \geq 4\eta,$ while the true convergence rate is $q_{\mathrm{true}} = 1 - \lambda_\eta^2 \leq  12\eta.$ In other words, predicted scaling with $\sigma_r^2(\mA_\eta) = 3\eta$ is necessary, and the bound on $q_{\mathrm{bnd}}$ is within a factor of three of the smallest value that can be obtained in any worst-case linear rate bound.  

The contraction factor in our bounds in \Cref{cor:row-correlation-bound} is also bounded by $1 - \frac{\sigma_r^2(\hat{\mA})}{(\lceil\log_2(r)\rceil/{2} + 1)^2 {\sigma_1^4(\hat{\mA})} + \sigma_r^2(\hat{\mA})}$, so our bound predicts  $q_\mathrm{bnd} \geq \frac{\sigma_r^2(\hat{\mA})}{(\lceil\log_2(r)\rceil/{2} + 1)^2 {\sigma_1^4(\hat{\mA})} + \sigma_r^2(\hat{\mA})}$. This bound is only dependent on the matrix spectrum and rank. First, the $1/\log^2(r)$ scaling with the rank is known to be necessary in the worst case, at least when it comes to the single-cycle contraction factor. This follows from the example in \cite[Section 3]{oswald1994convergence}; see also the discussion in \cite{sun2021worst,oswald2017random}. 

Regarding the dependence on the singular values via $\sigma_1^4(\hat{\mA})$ and $\sigma_r^2(\hat{\mA}),$ the necessity of the convergence rate scaling with $\frac{\sigma_r^2(\hat{\mA})}{\sigma_1^4(\hat{\mA})}$ follows from \cite[Example 2 in the arXiv version, arXiv:1604.07130]{sun2021worst} (although the authors used this example to argue about cases when randomized Kaczmarz method is faster than the classical, cyclic variant). In particular, in their example, the matrix $\mA$ is defined row-wise as $\va_j = (\cos(2\pi j/m), \sin(2\pi j/m))^\top,$ $j \in [m].$ The rows have norm equal to one, so $\hat{\mA} = \mA.$ It is possible to argue from this example that $q_{\mathrm{true}} = \Theta(\frac{\sigma_2^2(\mA)}{\sigma_1^4(\mA)}),$ where $\frac{\sigma_2^2(\mA)}{\sigma_1^4(\mA)} = \frac{2}{m}$, thus the scaling with $\frac{\sigma_2^2(\mA)}{\sigma_1^4(\mA)}$ is necessary in the worst case, and, in particular, it cannot be replaced by the inverse squared spectral condition number equal to $\frac{\sigma_2^2(\mA)}{\sigma_1^2(\mA)}$. Here we provide a more general example that allows the ratio $\frac{\sigma_2^2(\mA)}{\sigma_1^4(\mA)}$ to become arbitrarily small while retaining constant-factor tight scaling of the convergence rate with this ratio. As a result, we give a family of examples for which Kaczmarz method can be arbitrarily slow.

To construct the needed family of instances, we use the ``back-and-forth'' rotation instance from \cite[proof of Theorem 11]{evron2022catastrophic}.\footnote{\cite{evron2022catastrophic} uses this instance for a different purpose; namely to construct a worst-case sublinear rate example.} 
Fix an even integer $m \geq 8$ and an ordering vector $\vo := (1,3, \dots, m-1, m, m-2, m-4, \dots, 2)^\top$. Fix $t \in (0, \pi/m]$. Define the matrix $\mA_t$ via its row vectors
\begin{equation}\label{eq:conditioning-example}
\va_j = (\cos(\vo^{(j)}t), \sin(\vo^{(j)}t))^\top, \; j \in [m].
\end{equation}
Observe that the rows have the norms equal to one, so $\mA_t = \hat{\mA_t}.$ A direct calculation for $\mA_t^\top \mA_t$ based on \eqref{eq:conditioning-example} and the observation that the rows $\va_j$ correspond to rotations in the complex plane (the resulting trigonometric sum corresponds to $\sum_{j=1}^m e^{2\i j t} = e^{\i(m+1)t}\frac{\sin(m t)}{\sin(t)},$ where $\i$ denotes the imaginary unit) leads to
\begin{equation}\label{eq:rotation-example}
    \mA_t^\top\mA_t = \frac{m}{2}\mI_2 + \frac{\sin(mt)}{2\sin(t)}\begin{pmatrix}
        \cos((m+1)t) & \sin((m+1)t)\\
        \sin((m+1)t) & -\cos((m+1)t)
    \end{pmatrix}.
\end{equation}
The rightmost $2\times 2$ matrix in \eqref{eq:rotation-example} has eigenvalues $-1$ and $1.$ As a result, 
the eigenvalues of $\mA_t^\top\mA_t$ (and so the squared singular values of $\mA_t$) are $\sigma_1^2(\mA_t) = \frac{m}{2} + \frac{\sin(mt)}{2\sin(t)}$ and $\sigma_2^2(\mA_t) = \frac{m}{2} - \frac{\sin(mt)}{2\sin(t)}$. In particular, $\lim_{t \downarrow 0} \frac{\sigma_2^2(\mA_t)}{\sigma_1^4(\mA_t)} = 0$ and $\frac{\sigma_2^2(\mA_t)}{\sigma_1^4(\mA_t)} = \frac{2}{m}$ for $t = \pi/m$. Thus $\frac{\sigma_2^2(\mA_t)}{\sigma_1^4(\mA_t)} \in (0, 2/m].$ More generally, since $\sigma_1^2(\mA_t) \in [m/2, m]$ and we can write $\sigma_2^2(\mA_t) = \sum_{j=1}^m \sin^2\big(\big(j - \frac{m+1}{2}\big)t\big),$ using the standard inequality $\frac{4u^2}{\pi^2}\leq \sin^2(u) \leq u^2$ (which holds for $|u| \leq \pi/2$), we can bound $\frac{m(m^2 - 1)}{3\pi^2}t^2 \leq \sigma_2^2(\mA_t) \leq \frac{m(m^2 - 1)}{12}t^2$. As a result,
\begin{equation}
   \frac{m^2-1}{3\pi^2m}t^2 \leq \frac{\sigma_2^2(\mA_t)}{\sigma_1^4(\mA_t)} \leq \frac{m^2 - 1}{3m}t^2,
\end{equation}
establishing $\frac{\sigma_2^2(\mA_t)}{\sigma_1^4(\mA_t)} = \Theta(mt^2)$ along the entire range $t \in (0, \pi/m]$. 

Let us now characterize the true convergence behavior of Kaczmarz method. Define $\vw_j := (-\sin(jt), \cos(jt))^\top$. The $j^{\mathrm{th}}$ within-cycle update of Kaczmarz method \eqref{eq:Kaczmarz-update-within-cycle} is the orthogonal projection to the subspace of $\sR^n$ orthogonal to $\va_j,$ which can be expressed using the projection operator $\mP_j = \mI_2 - \va_j \va_j^\top = \vw_{\vo^{(j)}}\vw_{\vo^{(j)}}^\top$. Further, $\vw_{\vo^{(i)}}^\top \vw_{\vo^{(j)}} = \cos((\vo^{(i)}-\vo^{(j)})t).$ Thus, a full cycle update corresponds to
\begin{align*}
    \vx_{k+1} &= \mP_m \mP_{m-1} \dots \mP_1 \vx_k \\
    &= \vw_{\vo^{(m)}}\vw_{\vo^{(m)}}^\top \vw_{\vo^{(m-1)}}\vw_{\vo^{(m-1)}}^\top \dots \vw_{\vo^{(1)}}\vw_{\vo^{(1)}}^\top \vx_k\\
    &= \cos^{m-2}(2t)\cos(t)\vw_{2}\vw_{1}^\top \vx_k.
\end{align*}
Let $\vb = \vzero$ (and so $\vx^* = \vzero$), and initialize Kaczmarz method at $\vx_0 = \vw_{\vo^{(m)}} = \vw_2$. Then, since $\vw_2^\top\vw_1 = \vw_{\vo^{(m)}}^\top \vw_{\vo^{(1)}} = \cos(t)$, we get $\vx_{k} = (\cos^{m-2}(2t)\cos^2(t))^k\vx_0$, and, as a result, \[\norm{\vx_{k} - \vx^*}^2 = (\cos^{m-2}(2t)\cos^2(t))^{2k}\norm{\vx_0 - \vx^*}^2.\] 
To convert this into a statement involving convergence rate, we bound below the contraction factor $\cos^{2(m-2)}(2t)\cos^4(t)$ using $\cos^2(u) = 1 -\sin^2(u),$ for all $u,$ and the product inequality $\prod_{i=1}^N (1-u_i) \geq 1 - \sum_{i=1}^N u_i,$ which holds for all $u_i \in [0, 1]$ and all $N \geq 1.$ In particular,
\begin{align*}
    q_{\mathrm{true}} \leq 1- \cos^{2(m-2)}(2t)\cos^4(t) &= 1 - (1 - \sin^2(2t))^{m-2}(1 - \sin^2(t))^2\\
    &\leq  (m-2)\sin^2(2t) + 2\sin^2(t)\\
    &\leq (4m - 6)t^2,
\end{align*}
where in the last inequality we used $|\sin(u)| \leq u.$ Observe that  $q_{\mathrm{true}} \leq (4m - 6)t^2 = \Theta\big(\frac{\sigma_2^2(\mA_t)}{\sigma_1^4(\mA_t)}\big).$ Thus, combining with our bound $q_{\mathrm{bnd}} \geq \frac{\sigma_2^2(\mA_t)}{(9/4)\sigma_1^4(\mA_t) + \sigma_2^2(\mA_t)} \geq \frac{2}{5}\frac{\sigma_2^2(\mA_t)}{\sigma_1^4(\mA_t)}$ (see \Cref{cor:row-correlation-bound}), the convergence rate of Kaczmarz method is proportional to $\frac{\sigma_2^2(\hat{\mA})}{\sigma_1^4(\hat{\mA})}$ in the worst case, and this dependence cannot be improved. Since our example allows for this ratio to be arbitrarily small, iterates of Kaczmarz method can converge at an arbitrarily slow rate in the worst case. 

\subsection{When is Classical Cyclic Ordering Faster than Randomized?}

Finally, we discuss what the obtained convergence results tell us about the instances on which classical Kaczmarz method studied here is faster than its randomized variant introduced in \cite{Vershynin_2007}. We note that the complementary examples on which the classical version performs worse are already known from \cite{sun2021worst}. 

To provide a clean convergence comparison of Kaczmarz method under randomized and cyclic ordering, consider row-normalized matrices $\mA = \hat{\mA},$ so the uniform and the importance-weighted sampling in randomized Kaczmarz method are the same. For a fair comparison, one full cycle of classical Kaczmarz method should be compared to $m$ updates of the randomized variant. To disambiguate the bounds for the cyclic and randomized updates, let $\vx_k^{\mathrm{CK}}$ be the iterates of the classical Kaczmarz method \eqref{eq:Kaczmarz-update-full} and let $\vx_k^{\mathrm{RK}}$ be the iterates of the randomized update, corresponding to \eqref{eq:Kaczmarz-update-within-cycle} with rows sampled uniformly at random. We compare the bounds in terms of the linear  convergence rates for the iterates; additional advantages of the cyclic update for the squared residual may arise from better prefactors (see \Cref{cor:row-correlation-bound}) and sublinear convergence bounds from sufficiently small $k$ (see \Cref{thm:matrix-power-bound} and the examples in \Cref{sec:tightness}). 

Our bound from \Cref{cor:row-correlation-bound} (see \Cref{rem:convergence-of-the-iterates}) implies that
\begin{equation}\label{eq:CK-bound}
    \norm{\vx_k^{\mathrm{CK}} - \vx^*}^2 \leq \Big( \frac{\norm{\mL - \mI_m}_\op^2}{\norm{\mL - \mI_m}_\op^2 + \sigma_r^2(\mA)}\Big)^k \norm{\vx_0 - \vx^*}^2. 
\end{equation}
To argue about cases when the cyclic updates are faster, we need a lower bound on the convergence rate of the randomized version. For this, we use the tight characterization of Kaczmarz method from \cite{steinerberger2021randomized}. In particular, for $\vx_0 - \vx^* = \vv_r$ (slow singular-direction initialization), we have $\E[\vx_N^{\mathrm{RK}} - \vx^*] = \big(1 - \frac{\sigma_r^2(\mA)}{m}\big)^N \vv_r,$ for all $N \geq 1.$ By Jensen's inequality, for all $N \geq 1,$
\begin{equation}\label{eq:RK-bound}
    \E[\norm{\vx_{N}^{\mathrm{RK}} - \vx^*}^2] \geq \Big(1 - \frac{\sigma_r^2(\mA)}{m}\Big)^{2N} \norm{\vx_0 - \vx^*}^2. 
\end{equation}
Comparing \eqref{eq:CK-bound} and \eqref{eq:RK-bound} with $N = mk,$ we conclude that a sufficient condition for classical Kaczmarz method to outperform the randomized version under the considered initialization is that
\begin{equation}\label{eq:CK-vs-RK}
    \frac{\norm{\mL - \mI_m}_\op^2}{\norm{\mL - \mI_m}_\op^2 + \sigma_r^2(\mA)} < \Big(1 - \frac{\sigma_r^2(\mA)}{m}\Big)^{2m}\; \Leftrightarrow \; \norm{\mL - \mI_m}_\op^2 < \frac{\sigma_r^2(\mA)(1 - {\sigma_r^2(\mA)}/{m})^{2m}}{1 - (1 - {\sigma_r^2(\mA)}/{m})^{2m}}.
\end{equation}
From \Cref{lem:L-I-bnd}, $\norm{\mL - \mI_m}_\op^2 \leq \min\Big\{\sum_{1\le i < j \le m} \innp{{\va}_i, {\va}_j}^2, \, \Big(\frac{\lceil\log_2(r)\rceil}{2} + 1\Big)^2\sigma_1^4({\mA})\Big\},$ so this condition holds for near-orthogonal rows. In particular, for any $\sigma_r^2(\mA)$ that can be treated as a universal positive constant independent of $m$, $(1 - {\sigma_r^2(\mA)}/{m})^{2m}$ is a universal constant inside the interval $(0, 1)$. Because $\sum_{1\le i < j \le m} \innp{{\va}_i, {\va}_j}^2$ approaches zero as rows become closer to orthogonal (as $\innp{{\va}_i, {\va}_j} = \cos(\angle({\va}_i, {\va}_j))$; notice also that $\sigma_r^2(\mA)$ approaches one as rows become closer to orthogonal), condition \eqref{eq:CK-vs-RK} is satisfied for a range of near-orthogonal linear systems. 

Alternatively, consider the case $\sigma_r^2(\mA) < 1/2$. Since by Bernoulli's inequality $\big(1 - \frac{\sigma_r^2(\mA)}{m}\big)^{2m} \geq 1 - 2\sigma_r^2(\mA)$, a sufficient condition for cyclic updates to be faster is that 
\[
\norm{\mL - \mI_m}_\op^2 < \frac{1}{2} - \sigma_r^2(\mA). 
\]

Finally, the $2\times 2$ example from \eqref{eq:2x2-example} is a specific instance family on which the cyclic update always converges faster than the randomized update for initialization perpendicular to the second row, $\vx_0 = (- \sqrt{1-c^2}, c)^\top$ (with all other parameters the same as in \eqref{eq:2x2-example}). On this family of instances, $\norm{\vx_k^{\mathrm{CK}}}^2 = c^{4k}\norm{\vx_0}^2$ while $\E\norm{\vx_{2k}^{\mathrm{RK}}}^2 = \big(\frac{1 + c^2}{2}\big)^{2k}\norm{\vx_0}^2$. Because $c^4 < \big(\frac{1 + c^2}{2}\big)^{2},$ cyclic update is faster for every instance in this family (i.e., for every $c \in (0, 1)$). Notably, this family includes instances where the rows may not be nearly orthogonal nor nearly parallel.  
\section{Conclusion}

We presented new convergence bounds for the classical Kaczmarz method for solving consistent linear systems and argued about their tightness. It would be interesting to obtain comparable bounds for possibly inconsistent systems and various generalizations of the method, including block variants and more general projections onto convex sets. 

\section*{Acknowledgements}

This work was supported in part by the NSF CAREER Award CCF-2440563 and by the NSF MFAI Award DMS-2502282. 

\paragraph{AI disclosure.} The main ideas for this work came from the authors, based on J.\ Diakonikolas' past work on related cyclic methods \cite{cai2024tighter,cai2026near,song2023cyclic} and the insights regarding convergence of randomized Kaczmarz methods \cite{steinerberger2021randomized} presented by S.\ Steinerberger at U.\ Washington in January 2026. The core arguments (\Cref{lem:dynamics-of-ek}, \Cref{lem:spectrum-of-M}, and the initial version of \Cref{thm:matrix-power-bound} that contained row correlation-based contraction factor) were fully developed by the authors, without any use of AI. Using these initial results, the authors then used OpenAI's GPT-6 (Astra) to audit the results and search for possibly tighter bounds and examples demonstrating tightness. These were carried out in a highly interactive flow. The authors further used AI to aid in the literature search; all the references and related statements were verified by the authors. All the arguments in the paper, as well as all the prose, were written by the authors, then polished (primarily correcting typos) by the AI. The authors take the sole responsibility for the entire content of the paper.

\bibliographystyle{abbrv}
\bibliography{refs}

\end{document}